\newtheorem{theorema}{Theorem}
\theoremstyle{definition}
\newtheorem{consequence}{Consequence}
\def\abs|#1|{\ensuremath{\left\lvert#1\right\rvert}}
\def\norm|#1|{\ensuremath{\left\lVert#1\right\rVert}}
\DeclareMathOperator{\dom}{dom}
\DeclareMathOperator*{\argmax}{arg\,max}
\DeclareMathOperator*{\maximize}{maximize}
\DeclareRobustCommand\onedot{\futurelet\@let@token\@onedot}
\def\@onedot{\ifx\@let@token.\else.\null\fi\xspace}
\def\ie{\emph{i.e}\onedot}
\def\range#1#2{\ensuremath{\overline{#1..#2}}}
\def\range#1#2{\ensuremath{#1,\,\ldots,\,#2}}
\def\rangeo#1{\range1{#1}}
\def\rangeidx#1#2{\ensuremath{ {#1}_1,\,{#1}_2,\,\ldots,\,{#1}_{#2}  }}
\def\rangeidxO#1#2{\ensuremath{ {#1}_0,\,{#1}_1,\,\ldots,\,{#1}_{#2}  }}
\def\ie{i.\,e.}
\def\defval{\ensuremath{:=}}
\def\N_#1{\ensuremath{N_{#1}}}
\def\AlgComm#1{\# \textit{#1}}
\def\fnum#1{\ensuremath{f^{(#1)}}}
\def\f^#1_#2(#3){\ensuremath{f^{(#1)}_{#2}(#3)}}
\def\f^#1_#2(#3){\ensuremath{f^{(#1)}(#2,\,#3)}}
\def\tc#1(#2,#3,#4){\ensuremath{\mathcal#1(#2,\,#3,\,#4)}} %
\def\ts#1#2{\tc{#1}(:,#2,:)} %
\def\ttens#1#2{\ensuremath{\mathcal #1(#2)}}
\long\def\todel#1{{\color{gray}#1}}
\long\def\todel#1{\relax}
\def\None{\texttt{None}}
\def\absD#1{\abs|\mathbb D_{#1}|}%
\title{Constructive TT-representation of the tensors given as index interaction functions with applications}
\author{Gleb Ryzhakov\\
Center for Artificial Intelligence Technology,\\
Skolkovo Institute of Science and Technology\\
Bolshoy Boulevard 30, bld. 1\\
Moscow, Russia 121205\\
  \texttt{g.ryzhakov@skoltech.ru} \\
  \\
Ivan Oseledets\\
Center for Artificial Intelligence Technology,\\
Skolkovo Institute of Science and Technology\\
Bolshoy Boulevard 30, bld. 1\\
Moscow, Russia 121205\\
and\\
AIRI, Moscow, Russia\\
  \texttt{i.oseledets@skoltech.ru} \\
}
\begin{document}
\maketitle
\begin{abstract}
This paper presents a method to build explicit tensor-train (TT) representations.
We show that a wide class of tensors can be explicitly represented with sparse TT-cores,
obtaining, in many cases, optimal TT-ranks.
Numerical experiments show that our method outperforms the existing ones in several practical applications, including game theory problems.
Theoretical estimations of the number of operations show
   that in some problems,
such as permanent calculation,
  our methods are close to the known optimal asymptotics,
    which are
    obtained by a completely different type of methods.
\end{abstract}

\section{Introduction}
The tensor train is a powerful tool for compressing multidimensional tensors
(by tensor we mean a multidimensional array of complex numbers).
It allows us to circumvent the curse of dimensionality in a number of cases.
In a case of $d$-dimensional tensor with number of indices equal to~$n$ for each dimension,
direct storage of tensor involves $O(n^d)$ memory cells,
while tensor train bypasses~$O(ndr^2)$, where $r$ is average rank of TT decomposition~\cite{Oseledets2011}.
In many important applications, the average rank may be small enough so that $n^d\gg ndr^2$.

Existing methods allow one to build TT-decompositions by treating the tensor values as a black box.
The TT-cross approximation method~\cite{oseledets2010ttcross}  adaptively queries the points where the tensor value is evaluated.
The iterative alternative schemes  such as 
alternating least squares method~\cite{Oseledets2012}
or
alternative linear schemes~\cite{Holtz2012}, build a decomposition 
consistently updating the decomposition cores.
These methods do not take into account the analytic dependence,
if any, of the tensor value on its indices.
At the same time, even for relatively simple tensors, these methods can build a TT decomposition
for a long time and in the vast majority of cases
obtain an answer with a given error greater than zero,
even if the original tensor has an exact TT decomposition.

In this paper, we present a fast method
to directly construct cores of the TT decomposition
of a tensor for which the analytical dependence of the tensor value on the values of its indices is known.
Technically, our method works with functions,
each of which depends on tensor index and which are sequentially applied to
the values of the previous functions.
This sequence of operations has a simple form in inverse Polish notation~\eqref{eq:Polish}.
This functions we call \emph{derivative functions} hereafter.
However,
this assignment covers quite a large range of functional dependences of tensor value on its indices
if such a set of functions is chosen skillfully.
Examples are given in the section~\ref{sec:Applications} and Appendix.

Our method works best in cases where the
derivative functions together with the tensor itself 
have a small number of possible values.
In the Application section and Appendix
there are several examples for \emph{indicator} tensors taking values only 0 and 1.

TT-cores, obtained by our method, are highly sparse,
which gives an additional gain in performance.
In many cases our method gives the lowest possible TT-rank,
so that no further rounding of the TT-cores is required.
In some other applications, the ranks of the TT decomposition
obtained by our method
can be substantially higher than those obtained by approximate methods.
However, in a large number of such cases, the sparse structure of the cores allows
one to achieve performance comparable to known algorithms.

The advantage of representing tensors in the TT format is not only in overcoming the curse of dimensionality,
but also in the implemented tensor algebra for them:
we can easily add, multiply, and round TT-tensors~\cite{Oseledets2011}.
In this way we can, for example, construct a set of indicator tensors that represent some constraints in the given problem in advance,
and then combine these constraints arbitrarily by multiplying these tensors with a data tensor.
As a practical use of such a scheme, 
we give an example of calculating the permanent of a matrix.
Other examples of various combinations of ,,simple'' tensors
constructed by our method
to solve applied problems
are given in the Appendix.
Python code with the examples is in the public domain\footnote{\url{https://github.com/G-Ryzhakov/Constructive-TT}}.

Our method has a direct extension to more complex cases of tensor networks,
for one of the cooperative games below and
in several examples is Appendix. %
Such a construction is called TT-Tucker~\cite{Dolgov2013,Oseledets2011}.

Our main contribution and advantages of our approach
\begin{itemize}
    \item
        the exact and fast representation of the tensor in TT-format, which can then, if necessary, be rounded to smaller ranks with a given accuracy.
In many of the given examples, this representation is optimal in the sense that the ranks of the TT decomposition cannot be reduced without loss of accuracy;
    \item 
        highly sparse structure of TT-decomposition cores which leads to a noticeable reduction in calculations;
    \item 
        an unified approach and a simple algorithmic interface to inherently different tasks and areas
        including those problems for which it is not immediately obvious the representation of the function specifying the tensor value in the form of consecutive functions~\eqref{eq:Polish};
\item
the ability to construct an approximate TT-decomposition with a controlled error or/and with the specified maximum ranks of the TT decomposition;
    \item 
        the possibility to build an iterative algorithm for calculating the value sought in a particular problem, without involving the notion of the tensor or matrix operations;
    \item 
        the ability to combine different conditions and quickly impose additional conditions when the basic tensor has already been constructed (and rounded).

\end{itemize}
\paragraph{Background}
Consider a tensor $\mathcal K$ with the following \emph{shapes} $\{n_1,\,n_2,\,\ldots,\,n_d\}$, \ie,
$
    \mathcal K\in\mathbb C^{n_1\times n_2\times\cdots\times n_d}
$,
where~$\mathbb C$ is the set of complex numbers.
TT-decomposition of the tensor $\mathcal K$ with 
\emph{TT-ranks}
$\{\rangeidxO rd\}$
is defined as the product
\begin{multline*}
    \mathcal K(i_1,\,i_2,\,\ldots,\,i_n)=
    \sum_{\alpha_0=1}^{1}
    \sum_{\alpha_1=1}^{r_1}
    \cdots
    \sum_{\alpha_{d-1}=1}^{r_{d-1}}
    \sum_{\alpha_d=1}^{1}
    \mathcal G_1(\alpha_0,\,i_1,\,\alpha_1)
    \mathcal G_2(\alpha_1,\,i_2,\,\alpha_2)
    \cdots\\\cdots
    \mathcal G_{d-1}(\alpha_{d-2},\,i_{d-1},\,\alpha_{d-1})
    \mathcal G_d(\alpha_{d-1},\,i_d,\,\alpha_d),
\end{multline*}
where tensors $\mathcal G_i\in\mathbb C^{r_{i-1}\times n_i\times r_{i}}$ are called \emph{cores} of the TT-decomposition (we let $r_0=r_d=1$).

\section{Building TT-representation with the given sequence of functions}\label{sec:main}
In this section we present our main result: we show how to directly build a TT-representation of the tensor
given in the inverse Polish notation as follows
\begin{equation}
    \mathcal K(i_1,\,i_2,\,\ldots,\,i_d)=
    (0 i_1 f^{(1)} i_2 f^{(2)} \cdots i_{l-1} f^{(l-1)}  )
    (0 i_d f^{(d)} i_{d-1} f^{(d-1)} \cdots i_{l+1} f^{(l+1)}  )
    i_lf^{(l)},
    \!
    \label{eq:Polish}
\end{equation}
where $\{f^{(i)}\}_{i=1}^d$ %
are given functions of two variables,
except for the \emph{middle} function~$f_l$, which has three arguments in the case $0< l < d$.
The brackets
have been added to the notation for convenience.

Our  task is to construct TT-decomposition of the tensor~$\mathcal K$ 
knowing only %
functions~$\{f^{(i)}\}_{i=1}^d$.

From an algebraic point of view, we want to build TT-decomposition of  such tensors~$\mathcal K$,
each element~$\mathcal K(i_1,\,i_2,\,\ldots,\,i_d)$ of which can be calculated in two consecutive passages.
One, from to the left of the right, is as follows
\begin{gather}
    a_1(i_1)=\f^1_{i_1}(0),
    \quad 
    a_2(i_1,\,i_2)=\f^{2}_{i_2}(a_1),
    \quad
    a_3(i_1,\,i_2,\,i_3)=\f^{3}_{i_3}(a_2),\\
    \cdots\\
    a_{l-1}(i_1,\,i_2,\,\cdots,\,i_{l-1})=\f^{l-1}_{i_{l-1}}(a_{l-2}),
    \label{eq:polish_a_left}
\end{gather}
then from right to left
\begin{gather}
    a_d(i_d)=\f^{d}_{i_d}(0),
    \quad
    a_{d-1}(i_d,\,i_{d-1})=\f^{d-1}_{i_{d-1}}(a_d),
    \\
    \cdots\\
    a_{l+1}(i_d,\,i_{d-1},\,\cdots,\,i_{l+1})=\f^{l+1}_{i_{l+1}}(a_{l+2}),
    \label{eq:polish_a_right}
\end{gather}
and, finally,
\begin{equation}
    \mathcal K(i_1,\,i_2,\,\ldots,\,i_d)=
    f^{(l)}(i_l,\,a_{l-1},\,a_{l+1}).
    \label{eq:polish_a_mid}
\end{equation}

The computation tree of this procedure is shown on Fig.~\ref{fig:comp_tree}.

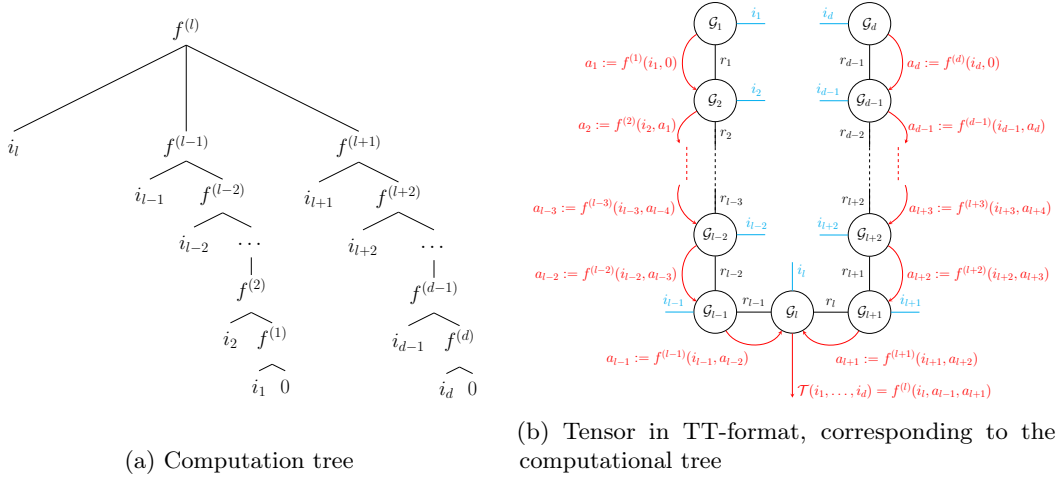
\begin{figure}[htpb]
    \centering
\begin{subfigure}[b]{0.45\textwidth}
\resizebox{\textwidth}{!}{
\centering
\LARGE
\Tree [ .\fnum l
$i_l$
[ .\fnum{l-1}
        $i_{l-1}$  
        [.\fnum{l-2}
        $i_{l-2}$
          [ .$\cdots$
           [ .\fnum2
        $i_2$
        [ .\fnum1
            $i_1$
            $0$
        ]
        ]
        ]
           ] 
] 
[ .\fnum{l+1}
        $i_{l+1}$
        [.\fnum{l+2}
           $i_{l+2}$ 
        [ .$\cdots$
        [ .\fnum{d-1}
        $i_{d-1}$
        [ .\fnum{d}
           $i_{d}$
           $0$
           ]
        ]
        ]
           ]
]
]
}
\vspace*{1em}
\caption{Computation tree}
\label{fig:comp_tree}
\end{subfigure}
\hfill
\begin{subfigure}[b]{0.51\textwidth}
\centering
\resizebox{\textwidth}{!}{%
\Large
\tikzstyle{arr} = [thick,red]
\tikzstyle{core} = [text width = 2em,  shape=circle]
\begin{tikzpicture}[scale=2.50,
every node/.append style = { align = center, minimum height = 10pt,
                                }, ]
\node[draw,core] (G1) at (0,-1) {$\mathcal G_{1}$};
\node[draw,core,below = 1.2 of G1] (G2) {$\mathcal G_{2}$};
\draw [thick] (G1) -- node[right] {$r_1$}  (G2);
\def\cutr{0.65}
\node[draw,core,below = 3.2 of G2] (Glm2) {$\mathcal G_{l-2}$};
\draw [thick] (Glm2) -- node[right] {$r_{l-3}$} ++(0,\cutr);
\draw [thick] (G2)  -- node[right] {$r_{2}$} ++(0,-\cutr);
\draw[dashed] (G2) ++(0,-0.4) -- (Glm2) ++(0,+0.4);
\node[draw,core,below = 1.2 of Glm2] (Glm1) {$\mathcal G_{l-1}$};
\draw [thick] (Glm2) -- node[right] {$r_{l-2}$} (Glm1);
\draw[arr] (G1) edge[out=-150,in=150,-stealth] node[left]   {$a_1:=f^{(1)}(i_1,0)$}  (G2);
\draw[arr] (G2) edge[out=-150,in=110,-stealth] node[left]    {$a_2:=f^{(2)}(i_2,a_1)$}  ++(-0.5,-0.6);
\draw[arr] (Glm2) edge[in=-110,out=150,stealth-] node[left] {$a_{l-3}:=f^{(l-3)}(i_{l-3},a_{l-4})$}  ++(-0.5,0.7);
\draw[arr] (Glm2) edge[out=-150,in=150,-stealth] node[left] {$a_{l-2}:=f^{(l-2)}(i_{l-2},a_{l-3})$}  (Glm1);
\draw[dashed,red,thick] (G2.west) ++(-0.1,-0.65) --  ++(0,-0.5);

\foreach \nd/\itext in {G1/i_1,G2/i_2,Glm2/i_{l-2}}
{
\draw [thick,cyan] (\nd) -- node[pos=0.7,above] {$\itext$} ++(0.7,0);
}
\node[draw,core,right = 1.2 of Glm1] (Gl) {$\mathcal G_{l}$};
\node[draw,core,right = 1.2 of Gl] (Glp1) {$\mathcal G_{l+1}$};
\draw [thick] (Glm1) -- node[midway,above] {$r_{l-1}$}  (Gl);
\draw [thick] (Gl) -- node[midway,above] {$r_{l}$}  (Glp1);
\draw [thick,cyan] (Glp1) -- node[pos=0.7,above] {$i_{l+1}$} ++(0.7,0);
\draw [thick,cyan] (Glm1) -- node[pos=0.7,above] {$i_{l-1}$} ++(-0.7,0);
\draw [thick,cyan] (Gl) -- node[pos=0.7,right] {$i_{l}$} ++(-0,0.7);
\draw[arr] (Glm1) edge[out=-60,in=-120,-stealth] node[below left]  {$a_{l-1}:=f^{(l-1)}(i_{l-1}, a_{l-2} )$}  (Gl);
\draw[arr] (Glp1) edge[out=-120,in=-60,-stealth] node[below right]  {$a_{l+1}:=f^{(l+1)}(i_{l+1}, a_{l+2} )$}  (Gl);
\draw[arr] (Gl) edge[pos=0.9,out=-90,in=90,-stealth] node[right]  {$\mathcal T(i_1,\ldots,i_d)=f^{(l)}(i_{l}, a_{l-1},a_{l+1} )$}  ++(0,-1.2);
\node[draw,core,above = 1.2 of Glp1] (Glp2) {$\mathcal G_{l+2}$};
\node[draw,core,above = 3.2 of Glp2] (Gdm1) {$\mathcal G_{d-1}$};
\node[draw,core,above = 1.2 of Gdm1] (Gd) {$\mathcal G_{d}$};
\foreach \nd/\itext in {Gd/i_d,Gdm1/i_{d-1},Glp2/i_{l+2}}
{
\draw [thick,cyan] (\nd) -- node[pos=0.7,above] {$\itext$} ++(-0.7,0);
}
\draw[dashed] (Gdm1) ++(0,-0.4) -- (Glp2) ++(0,+0.4);
\draw [thick] (Glp2) -- node[left] {$r_{l+2}$} ++(0,\cutr);
\draw [thick] (Gdm1)  -- node[left] {$r_{d-2}$} ++(0,-\cutr);
\draw [thick] (Gd) -- node[left] {$r_{d-1}$}  (Gdm1);
\draw [thick] (Glp2) -- node[left] {$r_{l+1}$}  (Glp1);

\draw[arr] (Gd) edge[out=-30,in=30,-stealth] node[right]   {$a_d:=f^{(d)}(i_d,0)$}  (Gdm1);
\draw[arr] (Gdm1) edge[out=-30,in=70,-stealth] node[right]    {$a_{d-1}:=f^{(d-1)}(i_{d-1},a_d)$}  ++(0.5,-0.6);
\draw[arr] (Glp2) edge[in=-70,out=30,stealth-] node[right] {$a_{l+3}:=f^{(l+3)}(i_{l+3},a_{l+4})$}  ++(0.5,0.7);
\draw[arr] (Glp2) edge[out=-30,in=30,-stealth] node[right] {$a_{l+2}:=f^{(l+2)}(i_{l+2},a_{l+3})$}  (Glp1);
\draw[dashed,red,thick] (Gdm1.east) ++(0.1,-0.65) --  ++(0,-0.5);
\end{tikzpicture}%
}
\caption{Tensor in TT-format, corresponding to the computational tree}
\label{fig:comp_tree_cores}
\end{subfigure}
\caption{Computation tree we can handle and the resulting TT-decomposition}
    \label{fig:comp_tree_ex}
\end{figure}

Each function~$f^{(k)}$ corresponds to a core in the TT-decomposition of the resulting tensor, see Fig.~\ref{fig:comp_tree_cores}.
In this figure, the cores of the expansion, which are 3-dimensional tensors, are represented by circles.
The blue lines with indices correspond to the input indices of the tensor.
The black lines connecting the cores
correspond to the dimensions,
by which the summation takes place.
The red arrows show the correspondence
between the output vector obtained by successive multiplication of the cores,
starting from the left or right end,
and the values of the derivative functions.

We write the first argument of the functions~$f$ as a lower index when describing examples and applications.

As an example,
consider step function in
the so-called 
\emph{Quantized Tensor Train decomposition} (QTT) (\cite{Oseledets2010}, \cite{khoromskij2011dlog})
when the tensor indices are binary $i_k\in\{0,\,1\}$
and all together represent a binary representation of some integer from the set~$\{0,\,1,\,\ldots,\,2^d-1\}$.
The value of the tensor represents the value of some given function~$P$ defined on this set,
\begin{equation}
    \mathcal I(i_1,\,i_2,\,\ldots,\,i_d)
    =
    P\Biggl(\sum_{j=0}^{d-1} i_{d-i}2^j \Biggr)
    \label{eq:QTT}.
\end{equation}
Function~$P$ is equal to the step function $P_{\text{step}}$ in this example:
\begin{equation}
        P_{\text{step}}(x)=
    \left\{
        \begin{aligned}
            0&,&x&\leq t,\\
            1&,&x&> t
        \end{aligned}
    \right.
\end{equation}
for the given integer number $t$, $0\leq t<2^d$.
Let the binary representation of~$t$ be
\begin{equation}
t=\sum_{j=0}^{d-1} b_{d-i}2^j.
\end{equation}
Then the form of the derivative functions for this tensor are depend only on the value of~$b_k$
and do not depend on the index~$k$ itself.
This function are the following: %
\begin{align}
    \text{If  } b_k&=0, &\text{ then }
    f^{k}_0(x)&:=f^{(k)}(0,\,x)=x,&\quad
    f^{k}_1(x)&:=f^{(k)}(1,\,x)=1;
\\
    \text{if  } b_k&=1, &\text{ then }
     f^{k}_0(x)&:=f^{(k)}(0,\,x)=
\left\{
    \begin{aligned}
        1&,&x&=1\\
        \text{None}&,&x&=0.
    \end{aligned}
\right.
,&\quad
    f^{k}_1(x)&:=f^{(k)}(1,\,x)=x.   
\end{align}

In our method,
the functions $f$ are predefined in the following way.
If,
in the process of calculating a tensor value,
the function~$f^{(k)}$ arguments are not in its domain,
we assume that it returns an empty value (we  denote this value by \None{} as in Python language).
The next function~($f^{(k-1)}$ or $f^{(k+1)}$), having received \None{}, returns also \None,
and so on, up to the ``middle'' function~$f^{(l)}$, which returns~0 if at least one of its arguments is \None.

In this example, the ``middle'' function is the last function: $l=d$, thus we %
consider it
as a function of two arguments.
The same is true for other examples in which the ``middle'' function is the first or last.

Note that in this example, the original analytic representation for the tensor did not assume pairwise interaction of the indices.
On the contrary, the formula~\eqref{eq:QTT} is quite integral:
its value depends on all variables at once.
Nevertheless, the expressions for the derivative function  turned out to be simple enough.
This situation holds for many examples as well, see Applications and Appendix.
Thus, our method can find a wide application.

It is worth noting that the arguments of the functions~$f$ have different physical meaning.
The first argument is the original natural number,
it corresponds to an index of the tensor under consideration.
By contrast, the second argument and the function value itself determine the relation between two indices of the tensor
and this relation can be complex.
In the general case,
middle-function~$f^{(l)}$ is complex-valued,
and the values of all other function~$f^{(k)}$, $k\neq l$,
can be of any nature for which a comparison operation is defined.

\subsection{TT-decomposition with the given derivative functions}

\begin{theorema}\label{th:main}
Let $\mathbb D_j$ be the image of the $j$-th function~$f^{(j)}$ 
from the derivative function set
for $1\leq j<l$ (``left'' function):
\def\defofD#1#2{\mathbb D_j=\bigl\{f^{(j#2)}(i,\,x)\colon 1\leq i\leq n_i,\,x\in\mathbb D_{j#11},\; \text{ if }f^{(j#2)} \text{ is defined at } (i,\,x)\bigr\} }%
\begin{equation}
    \defofD-{}, \quad j=\rangeo {l-1}
    .
\end{equation}
We let~$\mathbb D_0=\{0\}$.
Similarly for the ``right'' functions with $l<j\leq d$:
\begin{equation}
     \defofD+{+1}, \quad j=\range {l}{d-1}
    ,
\end{equation}
where we let~$\mathbb D_{d}=\{0\}$.
Then there exists TT-representation of the tensor~$\mathcal K$~\eqref{eq:Polish}
with TT-ranks~$r$ not greater than
\begin{equation}
    r=\{\absD0=1,\,\absD1,\,\ldots,\absD {d-1},\,\absD {d}=1\},
\end{equation}
where $\abs|\mathcal A|$ denotes the cardinality of a set~$\mathcal A$.
\end{theorema}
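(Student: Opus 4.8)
The strategy is to exhibit the TT-cores explicitly by indexing the auxiliary (rank) dimensions with the elements of the sets $\mathbb D_j$ and then verifying that contracting the cores reproduces the two-pass computation in~\eqref{eq:polish_a_left}--\eqref{eq:polish_a_mid}. First I would set up notation: for each $j$ with $0\le j\le l-1$, enumerate $\mathbb D_j=\{x^{(j)}_1,\dots,x^{(j)}_{|\mathbb D_j|}\}$ and let the $j$-th rank index $\alpha_j$ range over $\{1,\dots,|\mathbb D_j|\}$, so that $\alpha_j$ ``names'' the value $x^{(j)}_{\alpha_j}$ that the left partial computation $a_j$ could take. Symmetrically, for $l\le j\le d$ enumerate $\mathbb D_j$ and let $\alpha_{j-1}$ name the value of the right partial computation $a_j$. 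Then $\mathbb D_0=\mathbb D_d=\{0\}$ forces $r_0=r_d=1$, as required for a genuine TT-decomposition.

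Next I would write down the cores. For a left core ($2\le k\le l-1$) I set
\begin{equation}
\mathcal G_k(\alpha_{k-1},i_k,\alpha_k)=
\begin{cases}
1,&\text{if }f^{(k)}\text{ is defined at }(i_k,x^{(k-1)}_{\alpha_{k-1}})\text{ and }f^{(k)}(i_k,x^{(k-1)}_{\alpha_{k-1}})=x^{(k)}_{\alpha_k},\\
0,&\text{otherwise,}
\end{cases}
\end{equation}
with $\mathcal G_1(1,i_1,\alpha_1)=1$ iff $f^{(1)}(i_1,0)=x^{(1)}_{\alpha_1}$; the right cores $\mathcal G_k$ for $l+1\le k\le d$ are defined the same way using $f^{(k)}$ applied to $(i_k,x^{(k)}_{\alpha_k})$ matching $x^{(k-1)}_{\alpha_{k-1}}$, and $\mathcal G_d$ is anchored at $\mathbb D_d=\{0\}$. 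The middle core is $\mathcal G_l(\alpha_{l-1},i_l,\alpha_l)=f^{(l)}(i_l,x^{(l-1)}_{\alpha_{l-1}},x^{(l)}_{\alpha_l})$, with the convention that this equals $0$ whenever either argument is \None{} (i.e.\ whenever a partial computation failed). Each left/right core has exactly one nonzero entry per slice $(\,\cdot\,,i_k,\,\cdot\,)$ in the ``deterministic'' case, which is where the claimed sparsity comes from, though for the rank bound only the shape $r_{k-1}\times n_k\times r_k$ with $r_j=|\mathbb D_j|$ matters.

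The verification is then an induction on the length of the left pass: I claim that
\begin{equation}
\sum_{\alpha_0,\dots,\alpha_{k-1}}\mathcal G_1(\alpha_0,i_1,\alpha_1)\cdots\mathcal G_k(\alpha_{k-1},i_k,\alpha_k)
=
\begin{cases}
1,&\text{if the left pass through }f^{(k)}\text{ is defined and yields }a_k=x^{(k)}_{\alpha_k},\\
0,&\text{otherwise,}
\end{cases}
\end{equation}
which follows immediately from the single-nonzero-entry structure and the fact that the product of the indicators telescopes: the value $\alpha_{k-1}$ selected at step $k-1$ is exactly the one encoding $a_{k-1}$, so $\mathcal G_k$ then selects the unique $\alpha_k$ encoding $f^{(k)}(i_k,a_{k-1})=a_k$ (and the whole sum is $0$ precisely when some $f^{(j)}$ along the way is undefined, so that no consistent chain of $\alpha$'s exists). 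An identical induction from the right gives the analogous statement for $a_{l+1}$. Substituting both into the contraction over $\alpha_{l-1}$ and $\alpha_l$ at the middle core collapses the sum to the single surviving term $f^{(l)}(i_l,a_{l-1},a_{l+1})$, which by~\eqref{eq:polish_a_mid} equals $\mathcal K(i_1,\dots,i_d)$ — and when either partial pass is undefined the surviving middle value is $0$, matching the \None{}-propagation convention. Since $r_j=|\mathbb D_j|$ by construction, this proves the rank bound.

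The only real subtlety — the step I would be most careful about — is bookkeeping the \None{} / undefined-value semantics consistently across the three places it enters (a left or right core that ``has no valid transition'', the middle core returning $0$, and the boundary sets $\mathbb D_0,\mathbb D_d$): one must check that ``$f^{(k)}$ undefined at $(i_k,x)$'' contributes a genuinely zero slice rather than a spurious entry, so that an undefined partial pass produces an all-zero contribution and hence $\mathcal K=0$ there, exactly as the informal definition of the tensor prescribes. Everything else is the routine telescoping above; note also that the stated bound is an inequality because some elements of the nominal image $\mathbb D_j$ may in fact be unreachable given the upstream constraints, or because the constructed cores could admit further (exact) rounding — neither possibility affects the upper bound $r_j\le|\mathbb D_j|$.
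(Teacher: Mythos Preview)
Your proposal is correct and follows essentially the same constructive approach as the paper: enumerate the reachable sets $\mathbb D_j$, build 0/1 ``transition'' cores whose slices act by mapping the basis vector encoding $a_{k-1}$ to the basis vector encoding $f^{(k)}(i_k,a_{k-1})$, place the actual function values only in the middle core, and verify by a left/right induction that the partial contraction is the indicator (equivalently, the basis vector) of the current partial result. Your treatment of the \None{} semantics is, if anything, more explicit than the paper's; the only cosmetic difference is that the paper phrases the induction via $e(a_m)^T$ and matrix-vector products rather than indicator sums.
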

\begin{proof}
The proof is constructive.
The construction of the required cores of the TT-decomposition  takes place in two stages.

We first
enumerate elements of all images~$\{\mathbb D_i\}_{i=0}^d$
in arbitrary order, such that
we can address them by index.
Denote $\mathbb D_j[n]$
the $n$-th element of~$j$-th image,
$n=\range1\absD j$.
Now we can switch from the initial functions~$f$ with arbitrary values to functions~$\hat f$,
the range of each %
is 
a consecutive set of natural numbers starting from~$1$:
\def\fs#1{\text{index\_of}\left(f^{(j)}\bigl(i,\,\mathbb D_{j#11}[x]\bigr),\,\mathbb D_j\right)}%
\begin{align}
        \hat f^{(j)}(i,\,x)
    &:=\fs-,
    &
    j=\rangeo {l-1},&\;\;
    x=\rangeo \absD{j-1}\\
    \hat f^{(j)}(i,\,x)
    &:=\fs+,
    &
    j=\range{l+1}d,&\;\;
    x=\rangeo \absD{j+1}
    ,
    \label{eq:i_func_simple}
\end{align}
where function \texttt{index\_of} is defined as follows
\begin{equation*}
    z = \text{index\_of}(y,\,\mathcal A)
   \; \Longleftrightarrow\;
   y = \mathcal A[z]\;\;
   \text{ for some ordered set $\mathcal A$ and any $y\in\mathcal A$}.
\end{equation*}
We let $\text{index\_of}(\None,\,\mathcal A):=\None$.
Function~$\hat f^{(l)}$,
which corresponds to the ``middle'' function~$f^{(l)}$, is defined as follows
\begin{equation}
   \hat f^{(l)}(i,\,x,\,y):=
   f^{(l)}\bigl(i,\,
   \mathbb D_{l-1}[x],\,
   \mathbb D_{l}[y]
   \bigr)
\end{equation}

In the second stage,
we assign each integer input and output of new functions~$\{\hat f_i\}$
to a basis vector~$e$
\begin{equation}
    e(j,\,n)\in\mathbb R^n,
    \quad
    e(j,\,n)_i=\left\{
    \begin{aligned}
        1&,&\text{if } i=j&,\\
        0&,&\text{else}&
    \end{aligned}
    \right.
    ,
    \quad i,j=\rangeo n
    .
\end{equation}
Below we  omit the second argument in the notation of the basis vector~$e$
if it does not lead to ambiguity.
The basic idea is to construct the $j$-th ``left''  core ($j<l$) 
of the desired TT-decomposition 
corresponding to the function~$\hat f^{(j)}$ according to the following scheme:
\begin{equation}
\text{if }\;
y=\hat f^{(j)}(i,\,x),
\quad
\text{then }\;
e(x)^T\mathcal G_j(:,\,i,\,:)=e(y)^T,
\quad i=\rangeo n_j
\label{eq:func_matrix}
\end{equation}
where $\mathcal G_j(:,\,i,\,:)\in\mathbb R^{\absD {j-1}\times\absD j}$
denotes the matrix representing the $i$-th slice of $j$-th core.
The elements of this core are constructed explicitly:
\begin{equation}
\mathcal
G_j(x,\,i,\,y)=
\left\{
\begin{aligned}
    1&,&\text{if }\;y&=\hat f^{(j)}(i,\,x)\\
    0&,&&\text{else}
\end{aligned}
\right.,\;\;
x=\rangeo\absD {j-1},\;\;
y=\rangeo\absD j
    \label{eq:func_core_A}
\end{equation}
We do the same for the ``right'' cores or which~$i>l$,
except that multiplication on the basis vector takes place on the right
\begin{equation}
\text{if }\;
y=\hat f^{(j)}(i,\,x),
\quad
\text{then }\;
\mathcal G_j(:,\,i,\,:)e(x)=e(y),
\quad i=\rangeo n_j
.
\label{eq:func_matrix_right}
\end{equation}

Finally, we construct the middle-core~$\mathcal G_l$
which corresponds to the function~$f^{(l)}$:
\begin{equation}
    \mathcal G_l(:,\,i,\,:)=
\left\{
\begin{aligned}
    \hat f^{(l)}(i,\,x,\,y)&,&\text{if }\hat f^{(l)}&\text{ defined on }(i,\,x,\,y)\\
    0&,&&\text{else}
\end{aligned}
\right.,\;\;
x=\rangeo\absD {l-1},\;\;
y=\rangeo\absD l
.
\end{equation}

We summarize this two stages in Algorithms~1--2 in Appendix.%

Theorem statement follows from this construction:
after multiplying $m$ ``left'' constructed cores, $1\leq m<l$,
starting from the first one,
we get the following basis vector
\begin{equation}
    \mathcal G_1(:,\,i_1,\,:)\cdots \mathcal G_m(:,\,i_m,\,:)=e(a_m)^T
\end{equation}
where~$a_m$ is defined in~\eqref{eq:polish_a_left}. 
A similar basis vector~$e(a_p)$ is obtained by successive multiplication of all cores,
starting from the last one with the index~$d$ and up to some $p > l$
with~$a_p$ defined in~\eqref{eq:polish_a_right}.
Conclusively, 
the statement of the theorem follows from the relation 
$e(a_{l-1})\mathcal G_l(:,\,i_l,\,:)e(a_{l+1})=\mathcal K(i_1,\,i_2,\,\ldots,\,i_d)$
which is a consequence of the definition of the elements of the middle-core~$\mathcal G_l$ 
and which corresponds to the relation~\eqref{eq:polish_a_mid}.
\end{proof}

\subsection{Rank reduction}\label{sec:rank}
One way to reduce the TT-rank in the case when an image~$\mathbb D_i$ of a function~$f^{(i)}$ 
contains too many elements 
is to partition the image~$\mathbb D_i$ into several sets
and map the basis vector~$e$,
discussed in the second stage of the Theorem,
to one of these sets.
This is possible if the value of the function belongs to a space with a given topology.
In the simplest case, when the value of the function is real,
we can combine into one specified set only those elements from the image of the function for which $\abs|x-y|<\epsilon$, $x,y\in\mathbb D_i$ is satisfied
with the given~$\epsilon$.
In addition,
we can specify a maximum number of such sets,
thus fixing the maximum rank, increasing the  value of $\epsilon$ or combining some sets with each other.

Other ways of reducing the rank are described in the Appendix.

\subsection{Complexity}
The cores, except perhaps the middle-core,
obtained by our method
are highly sparse.
Each row of the slice~$\ts Gi$ of the core to the left of the middle-core consists of zeros,
except maybe one unit.
The same is true for the columns of the core to the right of the middle-core.
When multiplying a slice of a core by a vector,
we consider only those operations which do not involve addition with zero or multiplication by zero or unit.

When using the compressed format,
formally we do not need addition and multiplication to obtain a single tensor element,
since its calculation is reduced only to choosing the desired slice indices.

Consider a TT-tensor $\mathcal G$ obtained by our method with shapes $n=\left\{\rangeidx nd  \right\}$
and ranks $\left\{ \rangeidxO rd \right\}$
with the middle-core at the position~$l$, $1\le l\le d$.
To calculate the convolution of the tensor~$\mathcal G$ and an arbitrary rank-one tensor~$\mathcal W$ of the form
\begin{equation}
    \left\langle \mathcal G,\, \mathcal W \right\rangle =
    \sum_{\rangeidx r{d-1}}
    \;
    \sum_{\rangeidx nd}
    \mathcal G(1,\,n_1,\,r_1)
    \ldots
    \mathcal G(r_{d-1},\,n_d,\,1) %
    w(n_1)
    \cdots
    w(n_d),
    \label{eq:convolve_w}
\end{equation}
where $w(n)=\mathcal W(1,\,n,\,1)$,
we need
no more than~$n_\text{conv}$ additions
and
no more than~$n_\text{conv}$ multiplications
with
\begin{equation}
    n_\text{conv}= 
    \sum_{i=1}^{l-1}n_ir_{i-1}+
    \sum_{i=l+1}^{d}n_ir_{i}+
    n_m\bigl(r_{l-1}r_l+\min(r_{l-1},\,r_l)\bigr)
    \label{}.
\end{equation}
Indeed,
the first two sums in the last expression correspond to successive multiplication of the vector by the current core slice
for successive multiplication from each end of the tensor train up to the middle kernel with index~$l$.
The last term
corresponds to the multiplication of the two resulting vectors by the middle-core (which we assume dense) left and right.

\section{Applications}
\label{sec:Applications}
As a practical application of our method, in this section we give:
a) examples from the field of cooperative games, where we compare with existing methods and show our superiority both in execution time and in accuracy (our algorithm gives machine precision); b) the problem of calculating  matrix permanent,
where our method gives an estimated number of operations only twice as large as the optimized \emph{ad hoc} method of calculating the permanent using Hamiltonian walks.

\subsection{Cooperative games examples}\label{seq:coop_exampl}
As an example, consider several  so-called cooperative games~\cite{vonNeumann2007}.
Omitting the details of the economic formulation of the problem, 
let us briefly consider its mathematical model.
In general, in the theory of cooperative games it is necessary to calculate the following sum over all subsets of the given set~$\mathbb T$ of players
\begin{equation}
\pi(k)\defval
\sum_{\mathbb S\subseteq\mathbb T\setminus \{k\}}
p(\abs|\mathbb S|)
\bigl(
\nu(\mathbb S\cup \{k\})-
\nu(\mathbb S)
\bigr),
\quad
\text{ for all }\,
k\in\mathbb T.
\label{eq:games_semi}
\end{equation}
Here $p$ is some function of the number of players in a \emph{coalition}~$\mathbb S$. %
The function of a coalition $\nu$ is the function of interest, 
it depends on the game under consideration.
This function denotes the gain that a given coalition receives 
(\emph{value of the coalition}).

Below we briefly review several cooperative games
and compare the performance and accuracy of our algorithm
for solving them
with the algorithm presented in~\cite{Ballester2022},
which is based on the TT-cross approximation method
\footnote{We use the same setup as described in the cited paper for the experiments,
and take code from the open source repository \url{https://github.com/rballester/ttgames/} of the author of this paper.}.
For more details, we refer the reader to this paper and to the Appendix.
The results of the algorithm comparison are shown in Fig.~\ref{fig:games}.
\begin{figure}[thp]
\centering
\def\sb#1#2{
\begin{minipage}{0.23\linewidth}
\begin{subfigure}[b]{\linewidth}
\includegraphics[width=\linewidth]{images/times_for_coop_games_#1_times.pdf}
\caption{#2, times}
\label{fig:#1_times}
\end{subfigure}
\begin{subfigure}[b]{\linewidth}
\includegraphics[width=\linewidth]{images/times_for_coop_games_#1_res.pdf}
\caption{#2, accuracy}
\end{subfigure}
\end{minipage}
}
\sb{shoes}{Shoes game}
\sb{air}{Airport}
\sb{maj}{Weighted majority game}
\sb{bank}{Bankruptcy}
\caption{Times in seconds and relative accuracy 
as functions of number of players
for four cooperative games.
Brute force---calculating the sum~\eqref{eq:games_semi} directly,
Cross---results from the paper~\cite{Ballester2022}. %
}
\label{fig:games}
\end{figure}

\textbf{Shoe sale game.}
In this game,
participants are divided into two categories---those who sell left boots (indices~$1$ through~$L$)
and those who sell right boots (indices~$L+1$ through~$2L+1$).
As shoes can be sold only in pairs, the value of a coalition is the
minimum of the numbers of ''left'' and ''right'' players in a coalition.

Let us build tensors for this game.
To find the required value~$\pi$~\eqref{eq:games_semi}
in the case of cooperative games,
it is convenient to construct tensors that have a dimension equal to the number of players.
Each index of this tensor is binary: $1$~means a player is a member of a coalition, 
$0$~means he is not.

To construct the TT decomposition of the tensors
$p(\abs|\mathbb S|)
\nu(\mathbb S)
$
using our method,
let us take the following derivative functions:
\begin{equation}
    f_i^{(k)}(x)=x+i,\;\;
    1\leq k\leq d,\,
    k\neq L+1,
    \quad f_i^{(L+1)}(x,\,y+i)=\min(x,\,y)p(x+y+i),
    \;\;
    i=0,\,1,
    \label{}
\end{equation}
thus middle-core is placed on the position~$l=L+1$.
The derivative functions for constructing the tensor 
$p(\abs|\mathbb S|-1)
\nu(\mathbb S)
$
are selected in a similar way (we let $p(-1)=p(2L+1)=0$).
Once the cores for both tensors are constructed,
we can calculate the sum~\eqref{eq:games_semi} for different values of~$k$
by taking the corresponding slices of the cores by the method described in~\cite{Ballester2022}
and performing the convolution.
\textbf{Airport.}
It is not a cooperative game in its purest form,
as instead of gain we have a payoff,
but the mathematical essence is in the spirit of cooperative games.
Each player represents an aircraft that needs a landing strip of length~$c_k$.
Thus, $\nu(\mathbb S)=\max\{c_i \colon i\in\mathbb S\}$.
In order to construct a TT-representation of the tensor corresponding to this~$\nu$,
let us first order the values of~$c_k$ in descending order.
Then the derivative functions are
\begin{equation}
f^k_0(x)=x,\quad
f^k_1(x)=
\left\{
    \begin{aligned}
        x&,   &x&>0\\
        c_k&, &x&=0,
    \end{aligned}
\right.
,\quad 1\leq k\leq d.
\end{equation}
However, with these derivative functions the TT-ranks can get very large,
especially for non-integer~$c_k$.
To reduce the ranks, we do the following trick:
we  break the second function~$f^k_1$ into two terms, 
\begin{equation}
f^k_1(x)=
f^k_{(1)}(x)+
c_k\cdot f^k_{(2)}(x),
\quad
f^k_{(1)}(x)=
\left\{
    \begin{aligned}
        x&,   &x&>0\\
        \text{None}&, &&\text{else}
    \end{aligned}
\right.
,
\;\;
f^k_{(2)}(x)=
\left\{
    \begin{aligned}
        1&,   &x&=0\\
        \text{None}&, &&\text{else}
    \end{aligned}
\right..
\end{equation}
After that, 
we use three derivative functions to build TT-cores,
taking the multipliers~$c_k$ out of the build core as shown in Figure~\ref{fig:game_contr_max}.
This gives us a TT-Tucker format with TT-ranks equal to~$2$ and
matrices~$A_k$ with the following elements:
$
A_k=
\begin{pmatrix}
    1&0&0\\
    0&1&c_k
\end{pmatrix}
$.
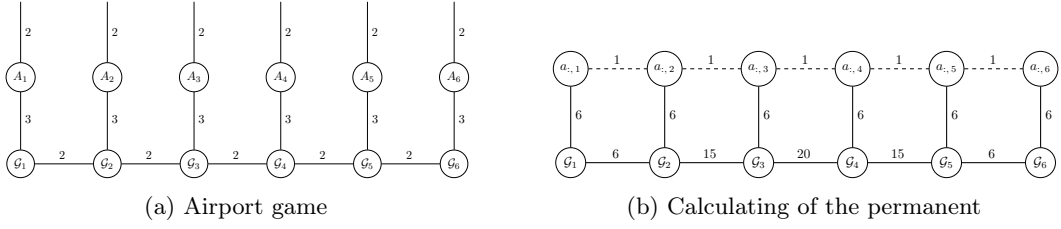
\begin{figure}[!tbh]
\centering
\begin{subfigure}[b]{0.44\textwidth}
\centering
\resizebox{\linewidth}{!}{%
\begin{tikzpicture}[scale=2.50]
    \def\maxd{6}
 \foreach \x in {1,...,\maxd}
      {
          \node[draw, shape=circle] (v\x) at (\x-1,0) {$A_{\x}$};
          \node[draw, shape=circle] (G\x) at (\x-1,-1) {$\mathcal G_{\x}$};
          \draw [thick] (v\x) -- node[right] {$3$} ++(0,-0.8) --  (G\x);
          \draw [thick] (v\x.north) -- node[right] {$2$} ++(0,+0.7);
      }
      \foreach \x in {1,...,\fpeval{\maxd - 1}}
      {
      \draw [thick] (G\x) -- node[above] {$2$} ++(0.8,0) -- (G\fpeval{\x + 1});
      }
\end{tikzpicture}%
}%
\caption{
    Airport game%
}
\label{fig:game_contr_max}
\end{subfigure}
\hfill
\begin{subfigure}[b]{0.48\textwidth}
\resizebox{\linewidth}{!}{%
\begin{tikzpicture}[scale=2.50]
 \foreach \x in {1,2,...,6}
      {
          \node[draw, shape=circle] (v\x) at (\x-1,0) {$a_{:,\,\x}$};
          \node[draw, shape=circle] (G\x) at (\x-1,-1) {$\mathcal G_{\x}$};
          \draw [thick] (v\x) -- node[right] {$6$} ++(0,-0.8) --  (G\x);
      }
\def\x{0}
\foreach \x/\r in {1/6,2/15,3/20,4/15,5/6}
      {
          \draw [dashed] (v\x) -- node[above] {$1$} ++(0.8,0) -- (v\fpeval{\x + 1});
          \draw [thick] (G\x) -- node[above] {$\r$} ++(0.8,0) -- (G\fpeval{\x + 1});
      }
\end{tikzpicture}%
}
\caption{
    Calculating of the permanent
}
\label{fig:permanent_net}
\end{subfigure}
\caption{Tensor network for two problems, $d=6$.
 The  numbers denote ranks (number of terms in sum).
The dashed line shows that at this point the rank is~$1$, which means that there is no summation and the corresponding cores are independent.
}
\end{figure}

\textbf{Other games.}
Fig.~\ref{eq:games_semi} also shows comparisons with the other two games.
In Weighted majority game tensor $\nu$ has the form
$\nu(\mathbb S)=1$ if $\sum_{i\in\mathbb S}w_i\geq M$ and~$0$ otherwise for given weights~$\{w_i\}$ and threshold~$M$.
In Bankruptcy game, $\nu(\mathbb S)=\max(0,\,E-\sum_{i\notin\mathbb S}c_i)$ for the given values of~$\{c_i\}$ and~$E$.
The derivative functions for these problems are chosen quite straightforwardly.

\subsection{Matrix calculus: Permanent}
Consider a task of calculating permanent of a matrix~$\{a_{ij}\}=A\in\mathbb C^{d\times d}$.
To solve this problem using the presented technique, let us construct two tensors in TT-format.
The first tensor~$\mathcal A$ will represent products of matrix~$A$ elements in the form
\begin{equation}
    \ttens A{\rangeidx id}=
    a_{i_1 1}
    a_{i_2 2}
    \cdots
    a_{i_d d}.
\end{equation}
This is rank-1 tensor and its cores~$\{\mathcal H_k\in\mathbb C^{1\times d\times1}\}_{k=1}^d$ are
$
    \tc H_k(1,i,1)
    =a_{ik}$,
     $i=\rangeo d
$.

The second tensor 
is an indicator tensor for such a set of indices,
in which all indices are different
\begin{equation}
    \ttens I{\rangeidx id}=
\left\{
\begin{aligned}
    1&,& &\text{ if all $\rangeidx id$ are different},\\
    0&,& &\text{ else}.
\end{aligned}
\right.
    \label{}
\end{equation}
The cores~$\mathcal G$ of this tensor are obtained using the following derivative functions
\begin{equation}
    f_i^k(x)=
    \left\{
    \begin{aligned}
        x+2^i&,& x\,\&\,2^i&=0,\\
        \None&,& & \text{ else}
    \end{aligned}
    \right.,
    \;\; k<d;
\quad
    f_i^d(x)=
    \left\{
    \begin{aligned}
        1&,& x\,\&\,2^i&=0,\\
        0&,& & \text{ else}
    \end{aligned}
    \right.
    ,
    \label{}
\end{equation}
where the ampersand sign stands for bitwise AND for integers.
In this scheme
the middle-core is the last ($d$-th) core.
The $k$-th bit of the input~$x$ of the derivative functions 
corresponds to the $i$-th value of the set $\{1,\,2,\,\ldots,\,d\}$:
if this bit is zero, then the $i$-th value of the index has not occurred before,
and thus this value is valid for the current index.
The function sets this bit in~$x$ and the value~$x$ is passed on.
If the bit has already been set,
the derivative function returns \None, since two identical indexes are forbidden.

The permanent value is equal to the convolution of tensor~$\mathcal I$
and tensor~$\mathcal A$.
Since the tensor~$\mathcal A$ is one-rank tensor,
we can look at the computation of the permanent as an contraction of the tensor~$\mathcal I$ with weights equal to the corresponding elements of the given matrix~$A$,
see Fig.~\ref{fig:permanent_net}.

After calculating the function~$f_i^k$ we get a number~$x$ in the binary representation of which
there are exactly~$k$ bits equal to one.
From this one can concludes that the corresponding rank~$r_k$ is equal to $r_k=\frac{d!}{(d-k)!k!}$.
Using relation~\eqref{eq:convolve_w} we can obtain an upper estimate on the total number of operations for the convolution of the given tensor
as $2\overline n_{\text{conv}}\sim(2^{N+1} N)$.
However, one can notice that on average half of the rows in the cores slices consist entirely of zeros, since index repetition is ``forbidden'':
$\mathcal G_k(i,\,j,\,:)=0$ at index~$i$ corresponds to the~$x$ with the $j$-th bit set.

Thus, $n_{\text{conv}}= 1/2\overline n_{\text{conv}} $ and
after the cores are built,
total number of operations~$n_{\text{tot}}$ (both additions and multiplications) required to obtain the result at these ranks has asymotics
\begin{equation}
n_{\text{tot}}=
    2n_{\text{conv}}
    \sim
    2^N N.
    \label{}
\end{equation}
This asymptotic is better
than the one that
can be obtained from the well-known Ryser's formula for calculating the permanent:
$
P(A)=
(-1)^N
\sum_{\mathbb S\subseteq\{1,2,\ldots,N\}}
(-1)^{\abs|\mathbb S|}
\prod_{i=1}^N
\sum_{j\in \mathbb S}a_{ij}
$.
When applied head-on, this formula requires~$O(2^{N-1}N^2)$ operations.
It is true that if one uses
Hamiltonian walk on $(N-1)$-cube (Gray code)
for a more optimal sequence of subset walks,
this formula will give asymptotic~$n_{\text{tot}}\sim(2^{N-1}N)$
which is only twice as good as ours~\cite[pp. 220--224]{CA1978}.

This is an example of a problem
where we can first pre-calculate the tensor~$\mathcal I$ with conditions and then reuse it with different data tensors~$\mathcal A$ containing elements of a particular matrix.

\subsection{Other examples}
Other examples are given in Appendix.
They include:
    simple examples for sum, where we explicitly show sparse TT-cores;
    other cooperative games,
    where we
    show how one can build iterative algorithm, based on our method;
    Knapsack problem (in several formulations), where we use existing algorithms to find the (quasi-) maximal element of the TT-tensor;
    Partition problem;
    Eight queens puzzle in several extended formulations;
    sawtooth sequence;
     standard Boolean satisfiability problem.

\section{Related works}
In addition to the works mentioned in the introduction, let us list the following.

In the paper \cite{Oseledets2012constr} explicit representations of several tensors with known analytical dependence of indices are presented. 
In the survey~\cite{Grasedyck2013} techniques for low-rank tensor approximation are presented, including TT-format.
In works~\cite{Cichocki2016part1}--\cite{Cichocki2017part2}
many examples of applying Tucker and Tensor Train decomposition to various problems including machine learning and data mining algorithms.
Tensor completion method described in the paper \cite{Steinlechner2016} uses Riemannian optimization for reconstruction a TT-format of a blackbox.

\section{Conclusions and future work}

We presented an algorithm for constructing the tensor in TT-format in the case when an explicit analytic dependence is given between the indices.
The cores of the obtained TT-tensor are sparse,
which speeds up manipulations with such a tensor.
Examples are given in which our method can be used to construct TT-representations of tensors encountered in many applied and theoretical problems.
In some problems, our representation yields an answer faster and more accurately than state-of-the-art algorithms.

As a limitation of our method, let us point out the fast growth of the ranks in the case when the derivative functions have a large size of their image set.
Although we have a rank restriction procedure for this case,
as plans for the future we specify an extension of our algorithm to accurately construct a TT-decomposition for such cases as well, if it is known to be low-ranked.

\FloatBarrier

\bibliographystyle{unsrt}
\bibliography{biblio.bib}

\begin{thebibliography}{10}

\bibitem{Oseledets2011}
I.~V. Oseledets.
\newblock Tensor-train decomposition.
\newblock {\em {SIAM} Journal on Scientific Computing}, 33(5):2295--2317,
  January 2011.

\bibitem{oseledets2010ttcross}
Ivan Oseledets and Eugene Tyrtyshnikov.
\newblock {TT}-cross approximation for multidimensional arrays.
\newblock {\em Linear Algebra and its Applications}, 432(1):70--88, January
  2010.

\bibitem{Oseledets2012}
I.~V. Oseledets and S.~V. Dolgov.
\newblock Solution of linear systems and matrix inversion in the {TT}-format.
\newblock {\em {SIAM} Journal on Scientific Computing}, 34(5):A2718--A2739,
  January 2012.

\bibitem{Holtz2012}
Sebastian Holtz, Thorsten Rohwedder, and Reinhold Schneider.
\newblock The alternating linear scheme for tensor optimization in the tensor
  train format.
\newblock {\em {SIAM} Journal on Scientific Computing}, 34(2):A683--A713,
  January 2012.

\bibitem{Dolgov2013}
Sergey Dolgov and Boris Khoromskij.
\newblock Two-level {QTT}-{T}ucker format for optimized tensor calculus.
\newblock {\em {SIAM} Journal on Matrix Analysis and Applications},
  34(2):593--623, January 2013.

\bibitem{Oseledets2010}
I.~V. Oseledets.
\newblock Approximation of $2^d\times2^d$ matrices using tensor decomposition.
\newblock {\em {SIAM} Journal on Matrix Analysis and Applications},
  31(4):2130--2145, January 2010.

\bibitem{khoromskij2011dlog}
Boris~N Khoromskij.
\newblock {$O (d\log N)$}-quantics approximation of {$N$-$d$} tensors in
  high-dimensional numerical modeling.
\newblock {\em Constructive Approximation}, 34(2):257--280, 2011.

\bibitem{vonNeumann2007}
John von Neumann and Oskar Morgenstern.
\newblock {\em Theory of Games and Economic Behavior (60th Anniversary
  Commemorative Edition)}.
\newblock Princeton University Press, December 2007.

\bibitem{Ballester2022}
Rafael Ballester-Ripoll.
\newblock Tensor approximation of cooperative games and their semivalues.
\newblock {\em International Journal of Approximate Reasoning}, 142:94--108,
  2022.

\bibitem{CA1978}
Albert Nijenhuis and Herbert~S. Wilf.
\newblock {\em Combinatorial Algorithms}.
\newblock Elsevier, 1978.

\bibitem{Oseledets2012constr}
I.~V. Oseledets.
\newblock Constructive representation of functions in low-rank tensor formats.
\newblock {\em Constructive Approximation}, 37(1):1--18, December 2012.

\bibitem{Grasedyck2013}
Lars Grasedyck, Daniel Kressner, and Christine Tobler.
\newblock A literature survey of low-rank tensor approximation techniques.
\newblock {\em {GAMM}-Mitteilungen}, 36(1):53--78, August 2013.

\bibitem{Cichocki2016part1}
Andrzej Cichocki, Namgil Lee, Ivan Oseledets, Anh-Huy Phan, Qibin Zhao, and
  Danilo~P. Mandic.
\newblock Tensor networks for dimensionality reduction and large-scale
  optimization: Part 1 low-rank tensor decompositions.
\newblock {\em Foundations and Trends{\textregistered} in Machine Learning},
  9(4-5):249--429, 2016.

\bibitem{Cichocki2017part2}
Andrzej Cichocki, Namgil Lee, Ivan Oseledets, Anh-Huy Phan, Qibin Zhao, Masashi
  Sugiyama, and Danilo~P. Mandic.
\newblock Tensor networks for dimensionality reduction and large-scale
  optimization: Part 2 applications and future perspectives.
\newblock {\em Foundations and Trends{\textregistered} in Machine Learning},
  9(6):249--429, 2017.

\bibitem{Steinlechner2016}
Michael Steinlechner.
\newblock Riemannian optimization for high-dimensional tensor completion.
\newblock {\em {SIAM} Journal on Scientific Computing}, 38(5):S461--S484,
  January 2016.

\bibitem{Andreescu2002-dc}
Titu Andreescu and Zuming Feng.
\newblock {\em 102 combinatorial problems}.
\newblock Birkhauser Boston, Secaucus, NJ, 2003 edition, October 2002.

\end{thebibliography}

\newpage

\appendix
\section*{Appendices}

\addcontentsline{toc}{section}{Appendices}
\renewcommand{\thesubsection}{\Alph{subsection}}
\subsection{Algorithms}
Algorithms~\ref{a:build_TT_func_f}--\ref{a:build_TT_func_G} summarize the constructive construction of TT-decomposition cores, which is described in Theorem~\ref{th:main}.
Here, the function \texttt{order} orders the set in any way, $\dom$ denotes the domain of a function.
If a function returns \None{} on some set of its arguments, then we assume that it is not defined on that set.

\begin{algorithm}[tbh!]
    \caption{Construction of the integer-valued functions based on the given complex-valued functions}
    \label{a:build_TT_func_f}
    \begin{algorithmic}[1]
    \def\ff_#1^#2(#3){f^{#2}(#1,\,#3)}
        \Require{Middle-index $l$, set of functions $\{f^{(i)}\}_{i=1}^d$ of two variables (function~$f^{(l)}$ have 3 arguments) }
        \Ensure{Functions $\{\hat f_j^{(i)}\}$}

    \State{\AlgComm{Initialization}}
    \State{$n_k={}$maximum of the domain of the function $f^{(k)}$ on first variable for $k=\rangeo d$ (shapes of the resulting tensor)}
    \State{\AlgComm{Part I. Finding the function outputs~$R$ of each function. $R$ is a list of arrays.}}
    \State{$R[0]\gets \{0\}$, $R[d]\gets \{0\}$}
    \For{$i=1$ to $l-1$}
    \State{$R[i]\gets{}$order$(\{\ff_k^{(i)}(x)\colon k=\rangeo {n_i},\,x\in R[i-1]\})$}
    \EndFor
    \For{$i=d-1$ to $l$ step $-1$}
    \State{$R[i]\gets{}$order$(\{\ff_k^{(i)}(x)\colon k=\rangeo {n_i},\,x\in R[i+1]\})$}
    \EndFor
    \State{\AlgComm{Part II. Defining new functions}}
    \State{\AlgComm{From the left}}
    \For{$i=1$ to $l-1$}
    \For{$j=1$ to $n_i$}
        \For{$k$ in $R[i-1]$}
        \If{defined $f_j^{(i)}(k)$}
        \State{$x\gets \text{index\_of}\bigl(k,\,R[i-1]\bigr)$}
        \State{$y\gets \text{index\_of}\bigl(\ff_j^{(i)}(k),\,R[i]\bigr)$}
        \State{$\hat f^{(i)}_j(x)\defval y$}
        \EndIf
        \EndFor
    \EndFor
    \EndFor
    \State{\AlgComm{From the right}}
    \For{$i=d$ to $l+1$ step $-1$}
    \For{$j=1$ to $n_i$}
        \For{$k$ in $R[i]$}
        \If{defined $f_j^{(i)}(k)$}
        \State{$x\gets \text{index\_of}\bigl(k,\,R[i]\bigr)$}
        \State{$y\gets \text{index\_of}\bigl(\ff_j^{(i)}(k),\,R[i-1]\bigr)$}
        \State{$\hat f^{(i)}_j(x)\defval y$}
        \EndIf
        \EndFor
    \EndFor
    \EndFor

    \State{\AlgComm{Middle-index function}}
    \For{$j=1$ to $n_l$}
    \For{$k_1$ in $R[l-1]$, $k_2$ in $R[l]$}
        \If{defined $\ff_j^{(l)}(k_1,\,k_2)$}
        \State{$x_1\gets \text{index\_of}\bigl(k_1,\,R[l-1]\bigr)$}
        \State{$x_2\gets \text{index\_of}\bigl(k_2,\,R[l]\bigr)$}
            \State{$\hat f^{(l)}_j(x_1,\,x_2)\defval \ff_j^{(l)}(k_1,\,k_2)$}
        \EndIf
    \EndFor
    \EndFor
    \State{Return functions $\{\hat f_j^{(i)}\}$, (optionally) outputs $R$.}

\end{algorithmic}
\end{algorithm}

\FloatBarrier
\begin{algorithm}[tbh!]
    \caption{Explicit construction of the cores of the functional tensor}
    \label{a:build_TT_func_G}
    \begin{algorithmic}[1]
        \Require{Middle-index $l$, set of integer-valued functions $\{\hat f^{(i)}_j\}$ of one variable (function~$\{\hat f_j^{(l)}\}$ have 2 arguments)}
        \Ensure{Cores $\mathcal G_1,\,\ldots,\,\mathcal G_d$ of TT-decomposition of the functional tensor}

    \State{\AlgComm{Initialization}}
    \State{$n_k={}$maximum of the index~$j$ value of the function $\hat f_j^{(k)}$ for $k=\rangeo d$}
    \State{\AlgComm{From head}}
    \For{$i=1$ to $l-1$}
    \State{$\mathcal G_i \gets \text{zeros}\bigl(\max\limits_j\sup (\dom\hat f_j^{(i)}),\, n_i,\, \max\limits_{j,\,x}\hat f_j^{(i)}(x) \bigr) $ }
    \For{$j=1$ to $n_i$} \;\#\textit{Build each slice of the core as in~\eqref{eq:func_matrix}}
    \For{$x$ in $\dom\hat  f_j^{(i)}$}
    \State {$\mathcal G_i(x,\,j,\,\hat f_j^{(i)}(x))\gets1$}
    \EndFor
    \EndFor
    \EndFor
    \State{\AlgComm{From tail}}
    \For{$i=d$ to $l+1$ step $-1$}
    \State{$\mathcal G_i \gets \text{zeros}\bigl(\max\limits_{j,\,x}\hat f_j^{(i)}(x),\, n_i,\, \max\limits_j\sup (\dom\hat  f_j^{(i)}) \bigr) $ }
    \For{$j=1$ to $n_i$}
    \For{$x$ in $\dom f_j^{(i)}$}
    \State {$\mathcal G_i(\hat f_j^{(i)}(x),\,j,\,x)\gets1$}
    \EndFor
    \EndFor
    \EndFor
    \State{\AlgComm{Build middle-index core}}
    \State{$\mathcal G_l \gets \text{zeros}\bigl(
        \max\limits_j\max(\{x_1\colon(x_1,\,x_2) \in \dom\hat  f_j^{(l)} \})
        ,\, n_l,\,
        \max\limits_j\max(\{x_2\colon(x_1,\,x_2) \in \dom\hat  f_j^{(l)} \})
    \bigr) $ }
    \For{$j=1$ to $n_l$}
    \For{$(x_1,\,x_2)$ in $\dom\hat  f_j^{(l)}$}
            \State {$\mathcal G_l(x_1,\,j,\,x_2)\gets\hat f_j^{(l)}(x_1,\,x_2)$}
    \EndFor
    \EndFor

    \State{Return cores $\mathcal G_1,\,\ldots,\,\mathcal G_d$.}
\end{algorithmic}

\end{algorithm}

\subsection{Other applications}
\label{sec:o_Applications}

The examples in this section fall into two (possibly intersecting) broad types.
The first of them contains the calculation of the exact value of some quantity for which the TT decomposition of the tensor arising in the problem is needed.
In this case, we do not perform a TT tensor rounding
  (except when we reduce the rank using SVD decomposition with zero threshold,
  but we assume that after that the tensor value remains the same within machine accuracy).
From a technical point of view, usually the convolution operations (or other similar operations)
are performed without explicitly constructing the cores  of TT-decomposition completely in computer memory.
Instead, we use the functions obtained after applying the
Algorithm~\ref{a:build_TT_func_f}
directly as arrays of their values.
This is equivalent to working with matrices in a compressed format.

The second type of application consists of problems
in which rounding with a given accuracy is the advantage of the tensor approach---we get the answer with a certain error, but faster.

\subsubsection{Simple (model) examples}
\paragraph{Sum}\label{seq:sum}

Consider the following tensor~$\mathcal S$,
which is some function~$P$ of the sum of the elements of the given vectors~$a_1$, $a_2$, $\ldots$, $a_d$:
\begin{equation}
\ttens I{\rangeidx id}
=
P(a_1[i_1]+a_2[i_2]+\ldots+a_d[i_d]).
\end{equation}

We can easily build its TT-representation using the presented technique if we put the derivative functions equal to
\begin{equation}
f_i^k(x)=x+a_k[i].
\end{equation}
The view of these functions is the same for all cores except for the middle-core.
The middle-core
can stand in any place in this case, but it makes sense to put it in the middle of the tensor train (at the position~$\lfloor (d+1)/2\rfloor$)
to reduce the TT-ranks.
For the middle-core on the $m$-th place
the derivative functions are equal to
\begin{equation}
f_i^m(x,\,y)=P(x+y+a_m[i]).
\end{equation}

In the simple case, when the vector elements are consecutive numbers from zero to a given number: $a_i=\{0,\,1,\,\ldots,b_i\}$,
we have a natural correspondence between the value of the function~$y$ and the basis vector~$e^T(i)$ representing it:
\begin{equation}
y\Longleftrightarrow e^T(y+1).
\end{equation}
Thus, for example, 
the third slice (of any of  first $m-1$ cores) corresponding to the addition of number~$2$ will be of the form
\begin{equation}
\mathcal G(:,\, 3,\, :)=
\begin{pmatrix}
    0&0&0&0&0&\cdots &0&\cdots&0\\
    0&0&0&0&0&\cdots &0&\cdots&0\\
    1&0&0&0&0&\cdots &0&\cdots&0\\
    0&1&0&0&0&\cdots &0&\cdots&0\\
    0&0&1&0&0&\cdots &0&\cdots&0\\
     &&&\vdots \\
    0&0&0&0&0&\cdots &1&\cdots&0\\
\end{pmatrix},
\end{equation}
This matrix can be written in block form as
\begin{equation}
\mathcal G(:,\, 3,\, :)=
\begin{pmatrix}
    O_1 & O_2\\
    I & O_3\\
\end{pmatrix},
\end{equation}
where
$O_1$, $O_2$ and $O_3$
are zero matrices, and $I$ is the identity matrix.
If the result of the calculation of the previous function is~$3$,
which corresponds to the vector~$e_4^T$ as the result of multiplication of all previous kernels,
then after multiplying by the slice data we obtain $e(6)^T=e(4)^T\mathcal G(:,\, 3,\, :)$.
The basis vector~$e(6)^T$ expectedly corresponds to the value of the function equal to~$5$.

But in the more complex case,
the correspondence between the function value and its vector representation may not be obvious.
Consider the following first two vectors
$$
a_1=\{1,\,2,\,10\},
\quad
a_2=\{-1,\,0,\,5,\,8\}.
$$
Then the second slice of the second core $\mathcal G_2(;,\,2,\,:)$ which is generated by the function $f(x)=x+0$,
is equal to
$$
\mathcal G_2(;,\,2,\,:)=
\begin{pmatrix}
    0&1&0&0&0&0&0&0&0&0\\
    0&0&1&0&0&0&0&0&0&0\\
    0&0&0&0&0&0&0&1&0&0\\
\end{pmatrix}.
$$
This is true because  the first derivative function has three values, which correspond to the first three basis vectors:
$$
1\Longleftrightarrow e^T(1),\;\;
2\Longleftrightarrow e^T(2),\;\;
10\Longleftrightarrow e^T(3),
$$
whereas the value area of the second derivative function has 8 elements, which correspond to
$$
0 \Longleftrightarrow e^T(1),\;
1 \Longleftrightarrow e^T(2),\;
2 \Longleftrightarrow e^T(3),\;\;\ldots,\;\;
10\Longleftrightarrow e^T(7),\;
15\Longleftrightarrow e^T(8),\;
18\Longleftrightarrow e^T(9).
$$

Note that in the examples above we chose to assign basis vectors to the values of the derived functions according to their their ascending sorting.
This is not a crucial point, since this correspondence is conditional, it can change and be different for each core.

In the degenerate case, when the function~$P$ is identical: $P(x)=x$
we can construct the desired TT-decomposition with ranks equal to 2.
Namely, in this case the cores have the following explicit form
\begin{equation}
    \ts{G_1}i
=\bigl(1,\,a_1[i] \bigr);
    \quad
    \ts{G_k}i=
\begin{pmatrix}
    1 & a_k[i]\\
0 & 1
\end{pmatrix},
\;\;2\leq k\leq d-1;
    \quad
    \ts{G_d}i=
\begin{pmatrix}
    a_d[i]\\
1
\end{pmatrix}.
\label{eq:ex:sum}
\end{equation}
These cores can be constructed using our techniques as follows.
Consider the following tensor
with binary indices $i_k\in\{0,\,1\}$ 
which is equal to~$1$ iff only one of its indices is~$1$:
\begin{equation}
    \ttens I{i_1,\,i_2,\,\ldots,\,i_d}= 
\left\{
\begin{aligned}
    1&,& \sum_{k=1}^{d}i_k&=1,\\
    0&,& &\text{else}
\end{aligned}
\right..
\end{equation}
We can construct its cores~$\widehat{\mathcal G}$  using the following derivative functions
$$
f_0^k(x)=x,\quad
f_1^k(x)=
\left\{
\begin{aligned}
    1&,& x&=0,\\
    \text{None}&,& &\text{else}.
\end{aligned}
\right.
$$
And consider one-rank tensor~$\mathcal H$ with values to be
summed up with the following cores
$$
\ts{K_k}0=(1)
,\quad
\ts{K_k}1=(v_k).
$$
For this tensor it is true that
$$
\ttens H{{\underbrace{0,\,0,\,\ldots,\,0}_{k-1},\,1,\,0,\,\ldots,\,0}}= v_k,
$$
thus
its convolution with the tensor~$\mathcal I$
gives the sum of elements of~$v$:
\begin{equation}
    \sum_{i_1=0}^{1}
    \sum_{i_2=0}^{1}
    \cdots
    \sum_{i_d=0}^{1}
    \ttens I{i_1,\,i_2,\,\ldots,\,i_d}
    \ttens H{i_1,\,i_2,\,\ldots,\,i_d}
    =
    \sum_{k=1}^{d}v_k.
    \label{eq:sum_convolv}
\end{equation}
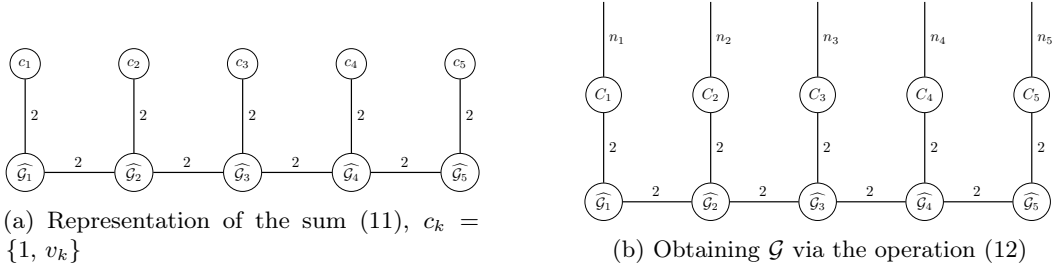
\begin{figure}[!htb]
\def\maxd{5}
\centering
\begin{subfigure}[b]{0.45\textwidth}
         \centering
\resizebox{\linewidth}{!}{%
\begin{tikzpicture}[scale=2.50]
     \foreach \x in {1,2,...,\maxd}
      {
          \node[draw, shape=circle] (v\x) at (\x-1,0) {$c_{\x}$};
          \node[draw, shape=circle] (G\x) at (\x-1,-1) {$\widehat{\mathcal G_{\x}}$};
          \draw [thick] (v\x) -- node[right] {$2$} ++(0,-0.8) --  (G\x);
      }
      \foreach \x in {1,...,\fpeval{\maxd - 1}}
      {
      \draw [thick] (G\x) -- node[above] {$2$} ++(0.8,0) -- (G\fpeval{\x + 1});
      }
\end{tikzpicture}%
}%
\caption{Representation of the sum~\eqref{eq:sum_convolv}, $c_k=\{1,\,v_k\}$}
\label{fig:sum_simple:a}
\end{subfigure}
\hfill
\begin{subfigure}[b]{0.45\textwidth}
         \centering
\resizebox{\linewidth}{!}{%
\begin{tikzpicture}[scale=2.50]
     \foreach \x in {1,2,...,\maxd}
      {
          \node[draw, shape=circle] (v\x) at (\x-1,0) {$C_{\x}$};
          \node[draw, shape=circle] (G\x) at (\x-1,-1) {$\widehat{\mathcal G_{\x}}$};
          \draw [thick] (v\x) -- node[right] {$2$} ++(0,-0.8) --  (G\x);
          \draw [thick] (v\x.north) -- node[right] {$n_\x$} ++(0,+0.7);
      }
      \foreach \x in {1,...,\fpeval{\maxd - 1}}
      {
      \draw [thick] (G\x) -- node[above] {$2$} ++(0.8,0) -- (G\fpeval{\x + 1});
      }
\end{tikzpicture}%
}%
\caption{Obtaining $\mathcal G$ via the operation~\eqref{ex:conv:G-C}}
\label{fig:sum_simple:b}
\end{subfigure}
\caption{Tensor network for  TT-tensor for simple sum for $d=\maxd$.
}
\label{fig:sum_simple}
\end{figure}
This convolution operation is shown schematically in Fig.~\ref{fig:sum_simple:a}.

Now, to be able to select the terms by choosing the indices of the tensor~$\mathcal G$,
we replace the vectors~$a$ in this network with matrices~$A$ of the following form
\begin{equation}
    C_k=
    \begin{pmatrix}
        1&1&1&\cdots&1&1\\
        v_k[1]&v_k[2]&v_k[3]&\cdots&v_k[n_k-1]&v_k[n_k]\\
    \end{pmatrix},
\end{equation}
where~$n_k$ is the length of the vector~$a_k$.
Finally, we obtain the cores~$\mathcal G_k$  by the convolution of the cores ~$\widehat{\mathcal G_k}$ and the matrices~$C_k$ (see Fig.~\ref{fig:sum_simple:b}.):
\begin{equation}
    \ts{G_k}i= \sum_{j=0}^1 \widehat{\mathcal G}(:,\,j,\,:) C_k(j,\,i).
\label{ex:conv:G-C}
\end{equation}

\paragraph{Step function in QTT format}%
\todel{%
In this example, consider the so-called QTT representation
when the tensor indices are binary $i_k\in\{0,\,1\}$
and all together represent a binary representation of some integer from the set~$\{0,\,1,\,\ldots,\,2^d-1\}$.
The value of the tensor represents the value of some given function~$P$ defined on this set:
\begin{equation}
    \mathcal I(i_1,\,i_2,\,\ldots,\,i_d)
    =
    P\left(\sum_{j=0}^{d-1} i_{d-i}2^j \right).
\end{equation}
Let $P$ be a step function:
$$
P(x)=
\left\{
    \begin{aligned}
        0&,&x&\leq t,\\
        1&,&x&> t
    \end{aligned}
\right.
$$
for the given integer number $t$, $0\leq t<2^d$.
Let the binary representation of~$t$ be
\begin{equation}
t=\sum_{j=0}^{d-1} b_{d-i}2^j.
\end{equation}
Then the form of the derivative functions will depend only on the value of~$b_k$.
If $b_k=0$, then
\begin{equation}
    f_k^0(x)=x
    ,\quad
    f_k^1(x)=1,
\end{equation}
and if 
$b_k=1$, then
\begin{equation}
    f_k^0(x)=
\left\{
    \begin{aligned}
        1&,&x&=1\\
        \text{None}&,&x&=0.
    \end{aligned}
\right.
    ,\quad
    f_k^1(x)=x.
\end{equation}
The same derivative functions will be for the middle-core
which is the last core
in this case.
}%
The definition of step function in QTT format and its 
derivative functions  are given in Section~\ref{sec:main}.
Let us consider in more detail how these functions give the desired value of the tensor built on them.

If for $j<k$ for some~$k$ values of~$b_j$ and~$i_j$ coincide: $b_j=i_j$, 
and for $k$-th  bit $b_k=0$ but $i_k=1$,
it means that $\mathcal I=1$ 
regardless of the lower bits. %
The value of the function~$f_k$ is~$1$.
This value will be ``carried'' to the end without change as for all functions~$f(1)=1$ 
giving the resulting value 1 of the TT-tensor.
If, on the contrary, $b_k=1$ and $i_k=0$
then we surely know that~$P=0$. 
The corresponding function is undefined at this value of the argument, which we symbolically write as $f_k^{(0)}(0)=\None$.
In the language of vectors, this means that
after multiplication by the $k$-th core
we get a zero vector,
not a basis vector,
so that for any subsequent values of the indices the result will be zero.
In the remaining case when $b_k=i_k$ for all $1\leq k\leq d$,
functions leave the argument unchanged.

The maximum TT-rank value in this example is no more than~$2$.
For the case of rank equal to 2, the explicit form of the cores are the following,
for $b_k=0$:
$$
G_k(:,\,0,\,:)=
\begin{pmatrix}
    1&0\\
    0&1\\
\end{pmatrix},
\;\;
G_k(:,\,1,\,:)=
\begin{pmatrix}
    0&1\\
    0&1\\
\end{pmatrix},
$$
and
for $b_k=1$:
$$
G_k(:,\,0,\,:)=
\begin{pmatrix}
    0&0\\
    0&1\\
\end{pmatrix},
\;\;
G_k(:,\,1,\,:)=
\begin{pmatrix}
    1&0\\
    0&1\\
\end{pmatrix}.
$$
\subsubsection{Cooperative games}
In this subsection, 
we take a closer look at the examples that were briefly given in Section~\ref{seq:coop_exampl}, 
and we also give some more examples of applying our methods to cooperative games, which are described in~\cite{Ballester2022}.

\todel{
Theory of cooperative games is
one of the areas where multidimensional tensors can be highly compressed in TT-format~\cite{Ballester2022}.
In this section we show only the possibility of fast compression of tensors by our methods;
for relevance and further detailed explanations of cooperative games we refer the reader to the cited article.

In general, in the theory of cooperative games it is necessary to calculate the following sum over all subsets of the set~$\mathbb T$ of players
\begin{equation}
\pi(k)\defval
\sum_{\mathbb S\subseteq\mathbb T\setminus \{k\}}
p(\abs|\mathbb S|)
\bigl(
\nu(\mathbb S\cup \{k\})-
\nu(\mathbb S)
\bigr)
\label{eq:games_semi_2}
\end{equation}
Here $p$ is some function of the number of players in coalitions~$\mathbb S$ and the corresponding tensor can be easily build (see ?????).
\todo{add ref to the TT}
The function of the coalition $\nu$ is the function of interest, it depends on the game under consideration.
This function denotes the gain that a given coalition receives (value of the coalition).

Below we
consider several cooperative games, the definition of the~$\nu$ function for them, and ways of constructing the TT-tensor that corresponds to it.
This TT-tensor has $\abs|\mathbb T|$
indices,
each index takes values~$0$ or~$1$,
indicating whether the player with the given number is part of the coalition.

}
\todel{
\paragraph{Shoe sale game}
In this game,
participants are divided into two categories---those who sell left boots (indices~$1$ through~$L$)
and those who sell right boots (indices~$L+1$ through~$2L+1$).
As shoes can be sold only in pairs, the value of a collation is the
minimum of the numbers of ''left'' and ''right'' players.

The tensor that corresponds to the function in question in this case is built head-on.
Both left and right functions has the form
$$
f_0(x)=x,\quad
f_1(x)=x+1,
$$
and the middle functions set placed on the $(L+1)$ position is the following
$$
g_0(x,\,y)=\min(x,\,y),\quad
g_1(x,\,y)=\min(x,\,y+1).
$$
The middle set of function corresponds to the ''right'' player, thus we add~$1$ to the argument~$y$ in the definition of~$g_1$.

The TT-cores built on such derivative functions have a maximum rank~$L+1$ without truncation and an effective rank $r_e\approx 17/(4\sqrt6)+\sqrt{2/3}L$.
\todo{define erank}
It is the smallest possible maximum rank
for a given sequence of indices, \ie{} it can not be reduce by truncation without loss of accuracy.

We can reduce the ranks so that they become the smallest possible in this sequence of players.
Namely, let the L-th place be occupied by the cores generated by the following functions
$$
f^L_i(x)=
\left\{
    \begin{aligned}
        x+i&,&\quad x+i&>0\\
        \text{None} &,& x+i&=0,
    \end{aligned}
\right.
,\quad i=0,\,1.
$$
The set of possible values at the output of these functions is equal to $\{1,\,2,\,\ldots,\,L\}$, excluding zero, so the rank is reduced by 1.
The resulting TT-ranks are $r=\{1,\,2,\,\ldots,\, L-1,\,L,\,L,\,L+1,\,L,\,L-1,\,\ldots,1\}$.
}

\paragraph{Shoe sale game}
Brief game conditions and
the derivative functions for this game are described in general form above.
Consider what happens when a certain~$k$ is given, \ie{} the order number of the player for whom we calculate the payoff~$\pi(k)$.
For each~$k$ we
rebuild  tensors
$p(\abs|\mathbb S|)
\nu(\mathbb S\cup \{k\})$ and
$p(\abs|\mathbb S|)
\nu(\mathbb S)
$.
At the $k$-th place we leave only one function of the two: $f^{(k)}_1$ for the first specified tensor, and $f^{(k)}_0$ for the second.
Thus, although we formally have a tensor of dimension~$d$, 
there is actually no summation at the $k$-th index, 
and we obtain a sum over all subsets of the set~$\mathbb T\setminus \{k\}$.
In addition, 
in the case of the  tensor
$p(\abs|\mathbb S|)
\nu(\mathbb S\cup \{k\})$, we  subtract a unit from the argument~$p$
since in this case 
$p(\abs|\mathbb S|)
\nu(\mathbb S\cup \{k\})
=p(\abs|\mathbb S\cup \{k\}|-1)
\nu(\mathbb S\cup \{k\})
$. 

Then we contract each tensor with ones,
namely, we calculate the following expression
\begin{equation}
    \def\oneterm#1#2#3{\left(\sum_{i_{#1}=1}^{n_{#1}}\mathcal G_{#1}(#2,\,i_{#1},\,#3)\right)}
    \oneterm11:
    \oneterm2::
    \cdots
    \oneterm d:1
    \label{eq:convl}
\end{equation}
from left to right in sequence.
In the case of this example, all $n_j=2$, $j=\rangeo d$.
Thus we build and contract~$2N=4L+2$ tensors, %
but since we do not build full cores in memory
and instead work in a compressed format, we get the computation times shown in Fig.~\ref{fig:shoes_times}.

\paragraph{Airport}
Brief game conditions and
the derivative functions for this game are described in general form above.

In this problem we once explicitly build the cores for the tensor~$\nu$, 
making a convolution with matrices~$A_k$ (see Fig.~\ref{fig:game_contr_max}).
This convolution
in fact, is reduced to the replacement of the corresponding unit in the core slice by the value of~$c_k$.
Separately, we construct the cores for the tensor~$p$, which is the given function of the sum of all indices equal to~$1$.
The construction of such a tensor is described at the beginning of Sec.~\ref{seq:sum}; 
in this case we  put $a_j[0]=0$,  $a_j[1]=1$, $j=\rangeo d$.

To calculate the value of~$\pi(k)$,
we applied a trick similar to the one used in~\cite{Ballester2022}.
Namely,
for each~$k$
we have left the core~$\mathcal G_k^{(p)}$
from the TT-decomposition to the tensor~$p$
only the first slice of dimensionality, which corresponds to the index~$i_k=0$,
so the following is true for the new core~$\widehat {\mathcal G_k}^{(p)}$:
\begin{equation}
    \widehat {\mathcal G_k}^{(p)}(:,\, 0,\,:)=
    \mathcal G_k^{(p)}(:,\, 0,\,:), \quad
    \widehat {\mathcal G_k}^{(p)}\in\mathbb R^{r^{(p)}_{k-1}\times1\times r^{(p)}_k}.
\end{equation}

For the core~$\mathcal G_k^{(\nu)}$ from the TT-decomposition of the tensor~$\nu$
we also removed the second slice and took the slice difference of the original core as the first slice:
\begin{equation}
    \widehat {\mathcal G_k}^{(\nu)}(:,\, 0,\,:)=
    \mathcal G_k^{(\nu)}(:,\, 1,\,:)-
    \mathcal G_k^{(\nu)}(:,\, 0,\,:),
    \quad
    \widehat {\mathcal G_k}^{(\nu)}\in\mathbb R^{r^{(\nu)}_{k-1}\times1\times r^{(\nu)}_k}.
\end{equation}

For the final result,
we have taken a convolution of these tensors,
which in TT-format is written as
\def\oneterm#1#2#3{\left(\sum_{i_{#1}=1}^2
    {\mathcal G^{(p)}_{#1}}(#2,\,i_{#1},\,#3)
    \otimes 
    {\mathcal G^{(\nu)}_{#1}}(#2,\,i_{#1},\,#3)
    \right)}
\begin{multline}
    \pi(k)=
    \oneterm11:
    \oneterm2::
    \cdots
    \\
    \cdots
    {\mathcal G^{(p)}_{k}}(:,\,1,\,:)
    \otimes
    {\mathcal G^{(\nu)}_{k}}(:,\,1,\,:)
    \cdots
    \oneterm d:1,
\end{multline}
where $\otimes$ denotes the Kronecker product of matrices.

For the numerical experiments we take values $c_k$ as
i.i.d.\ random values uniformly distributed on the interval~$[0,\,1]$.
\paragraph{Weighted majority game}
For this game, briefly described in Section~\ref{seq:coop_exampl},
we took the following derivative functions
for the tensor
$p(\abs|\mathbb S|)(
\nu(\mathbb S\cup \{k\}) - 
\nu(\mathbb S))
$ of dimension $d=\abs|\mathbb T|-1$:
\def\funj#1{
    f_0^j(x)=x,\quad
    f_1^j(x)=
    \left\{
    \begin{aligned}
        \{x[1]+w_{j#1},\,x[2]+1\}&,& x[1]+w_{j#1}&\leq M,\\
        \None&,& \text{else}
    \end{aligned}
    \right.,
}
\begin{equation}
    \funj{}
    \quad
    1\leq j <k,
\end{equation}
\begin{equation}
    \funj{+1}
    \quad
    k\leq j <d,
\end{equation}
and for the middle-function, which is the last one in this example, we have
\begin{equation}
    f_i^d(x)=
    \left\{
        \begin{aligned}
            p(x[2]+i)&,& x[1]+iw_{d+1}+w_k&\geq M>x[1]+iw_{d+1},\\
            \None&,& \text{else}
        \end{aligned}
    \right.,
    \quad
    i=0,\,1.
\end{equation}
(In the case of~$k=\abs|\mathbb T|=d+1$, we take~$w_{d}$ instead of $w_{d+1}$).
Note that in this example
the derivative functions are defined on a set of vectors of length~$2$. 
The first component of this vector accumulates a sum of weights~$\{w_j\}$ to compare with the threshold~$M$.
The second component of the input vector counts the number of players in the coalition~$\mathcal S$,
this value is passed to the function~$p$
in the last derivative function.

If at step~$j$, $j<d$ it turns out that
the accumulated sum of weights already exceeds the threshold~$M$ ($x[1]+w_j>M$)
then the derivative function returns \None,
thus zeroing out the value of the tensor.
Indeed, in this case, the difference $
\nu(\mathbb S\cup \{k\}) - 
\nu(\mathbb S))
$ will be obviously equal to zero regardless of what other coalition members are added, i.e. regardless of  $i_l$, $l>j$ index values.
This trick reduces the TT-ranks of the resulting tensors and thus reduces the execution time.

In this example, we construct the cores of the specified tensor~$N=\abs|\mathbb T|$ times, 
and then perform its convolution by the formula~\eqref{eq:convl} in the sparse format.
For the numerical experiments we take values $\{w_k\}$ as
i.i.d.\ random integers uniformly distributed on the interval~$[1,\,10]$
and take threshold equal to $M=\left\lfloor1/2\sum_kw_k \right\rfloor + 1$.

Technically, we can avoid passing vectors to the deriving functions,
but limit ourselves to an integer argument~$X$ equal to~$X=x[1]+x[2]N_{\text{big}}$,
where~$N_{\text{big}}$ 
is a sufficiently large integer ($N_{\text{big}}=2^{15}$).
Then in each derivative function we produce an ``unpacking''
$x[1]=X\mod N_{\text{big}}$
and
$x[2]=\left\lfloor X / N_{\text{big}}\right\rfloor$.

\paragraph{Bankruptcy}
This game has a function of $\max$ on the sum of the values,
so it is mathematically similar to the airport game,
but we have chosen a different way of constructing derivative functions.
Namely, we take
\begin{equation}
    f_0^j(x)=
    \left\{
    \begin{aligned}
        \{x[1]-c_{\text{add}}(i),\,x[2]\}&,& x[1]-c_{\text{add}}(i)&> 0,\\
        \None&,& \text{else}
    \end{aligned}
    \right.,
    \;
    f_1^j(x)=\{x[1],\,x[2]+1\}
    ,\;\; 1\leq j < d,
\end{equation}
where 
$c_{\text{add}}(i)=c[i]$ if $i<k$ and
$c_{\text{add}}(i)=c[i+1]$, else.
For the middle function, which is the last one ($l=d=N-1$), 
we have:
\begin{equation}
    f_0^d(x)=
    \left\{
    \begin{aligned}
        \bigl(x[1]-c_{\text{add}} - \max(0,\, x[1]-c_{\text{add}} - c[k])\bigr)p(x[2])&,& x[1]-a_{\text{add}}&> 0,\\
        0&,& \text{else}
    \end{aligned}
    \right.,
\end{equation}
where $c_{\text{add}}=c[d+1]$ if $k<d+1$, and $c_{\text{add}}=c[d]$ if $k=d+1$;
\begin{equation}
    f_1^d(x)=\bigl(x[1] - \max(0,\, x[1] - c[k])\bigr)p(x[2]+1).
\end{equation}

Note that in the case of this example, the initial functions~$f_1$ are given the value~$\{E,\,0\}$ as  input
instead of~$0$.
This is done to simplify the type of derivative functions.
Like in Weighted majority game, derivative
functions take as input a vector of~$2$ elements,
the first of which accumulates the values of~$\{c_j\}$, 
and the second element summarizes the number of 
players in a coalition.

In this example, we construct the cores of the specified tensor~$N=\abs|\mathbb T|$ times, 
and then perform its convolution by the formula~\eqref{eq:convl} in the sparse format.
For the numerical experiments we take values $\{c_k\}$ as
i.i.d.\ random integers uniformly distributed on the interval~$[1,\,10]$
and take the value of~$E$ equal to $E=1/2\sum_kc_k$.

\paragraph{One seller market game}
In this game, first player in selling some good, players $2,\,\ldots,\,\abs|\mathbb T|$ offer prices $\{a_2,\,a_3,\ldots,\,a_{\abs|\mathbb T|}\}$ for this good, $a_k\geq0$.
If the first player is in a coalition $\mathbb S$, the  value of this coalition
is equal to the maximum price offered by the members of this coalition
\begin{equation}
    \nu(\mathbb S)=\max_{k\in\mathbb S,\,k\neq 1}a_k.
\end{equation}
If there is no first player in the coalition, its price is zero: $\nu(\mathbb S)=0$.

For the first player we take
the following derivative functions
\begin{equation}
    f^1_0(x)=\None,\quad f^1_1(x)=0,
\end{equation}
and for the rest of the players:
\begin{equation}
    f^k_0(x)=x,\quad f^k_1(x)=\max(x,\,a_k).
\end{equation}

The problem under consideration is another 
example of such a tensor construction, 
the ranks of which depend on the order of the indices.
For an unsuccessful sequence, the TT-ranks of the resulting tensor can be large and
it is necessary to perform an SVD-step for reducing the TT-ranks.
The final ranks depend on specific values of~$\{a_k\}$.

To reduce the ranks, we 
we can change the sequence of players, as it will not affect the calculation of the sum~\eqref{eq:games_semi}.
Namely,
place the first player first, and sort the other players according to their pre-determined prices in descending order.
With this sorting we can
take the same derivative function for $k>1$ as in Airport game.

\todel{
simplify the derivative functions:
\begin{equation}
    f^k_0(x)=x,\quad
    f^k_1(x)=
    \left\{
        \begin{aligned}
            x&,   &x&>0\\
            a_k&, &x&=0,
        \end{aligned}
    \right.
    ,\quad k>1.
\end{equation}
Now let us do the following trick: we  break the second function~$f^k_1$ into two terms
\begin{equation}
    f^k_1(x)=
    f^k_{(1)}(x)+
    a_k\cdot f^k_{(2)}(x),
    \quad
    f^k_{(1)}(x)=
    \left\{
        \begin{aligned}
            x&,   &x&>0\\
            \None&, &&\text{else}
        \end{aligned}
    \right.
    ,
    \;\;
    f^k_{(2)}(x)=
    \left\{
        \begin{aligned}
            1&,   &x&=0\\
            \None&, &&\text{else}
        \end{aligned}
    \right..
\end{equation}
}

Thus, in this problem we come to the tensor network shown in Fig.~\ref{fig:game_contr} 
with matrices~$A$ of the same kind as the matrices in the airport problem:
\begin{figure}[!htb]
\centering
\resizebox{20em}{!}{%
\begin{tikzpicture}[scale=2.50]
    \def\maxd{6}
    \node[draw, shape=circle] (G1) at (0,-1) {$\mathcal G_1$};
    \draw [thick] (G1.north) -- node[right] {$2$} ++(0,+1.7);
 \foreach \x in {2,...,\maxd}
      {
          \node[draw, shape=circle] (v\x) at (\x-1,0) {$A_{\x}$};
          \node[draw, shape=circle] (G\x) at (\x-1,-1) {$\mathcal G_{\x}$};
          \draw [thick] (v\x) -- node[right] {$3$} ++(0,-0.8) --  (G\x);
          \draw [thick] (v\x.north) -- node[right] {$2$} ++(0,+0.7);
      }
      \foreach \x in {1,...,\fpeval{\maxd - 1}}
      {
      \draw [thick] (G\x) -- node[above] {$2$} ++(0.8,0) -- (G\fpeval{\x + 1});
      }
\end{tikzpicture}%
}%
\caption{Tensor network for building TT-tensor for the one seller market game with optimal players ordering for $\abs|\mathbb T|=6$.
 The  numbers near lines denote ranks (number of terms in sum).
}
\label{fig:game_contr}
\end{figure}
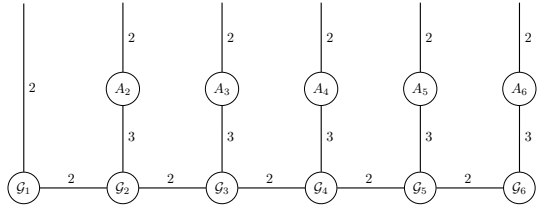
and matrices $A_k$ has the following elements
\begin{equation}
    A_k=
    \begin{pmatrix}
        1&0&0\\
        0&1&a_k
    \end{pmatrix},\quad
    k=\range2{\abs|\mathbb T|}.
\end{equation}

Technically, the convolution procedure can be omitted explicitly, taking it into account at the stage of tensor construction.

Note that since the cores~$\mathcal G$ of the resulting TT-tensor
are constructed using the same pattern as well as the cores of the tensor representing values of~$p$,
they can be built on the fly
rather than stored when calculating the sum~\eqref{eq:games_semi}.
Moreover, we can abandon the construction of the cores at all,
immediately producing all algebraic operations,
given the special (sparse) kind of kernels.
For the considered problem,
the sequence of matrix operations to calculate the sum~\eqref{eq:games_semi}
is reduced to the Algorithm~\ref{a:build_sum_in_game}.%
\begin{algorithm}[tbh!]%
    \caption{Algorithm for calculating the sum~\eqref{eq:games_semi}for one seller market game, based on obtained TT-cores}
    \label{a:build_sum_in_game}
    \begin{algorithmic}[1]
        \Require{Values ~$\{a_i\}$ sorted in descending order, number~$k$, weight function~$P$}
        \Ensure{Value of $\pi_k$}
        \State{$v\gets \{\underbrace{0,\,0,\,\ldots,\,0}_{2\abs|\mathbb T|}\}$ \AlgComm{Initialization}}
        \State{$v[0:4]\gets \{0,\,1\}\otimes\{1,\,0\}$ }
    \For{$i=1$ to $\abs|\mathbb T|-2$}
    \If{$i==k$}
    \For{$j=i$ to $0$ step $-1$}
    \State{$v[2j+1]\gets a_{i+1}\cdot v[2j]$}
    \State{$v[2j]\gets -v[2j]$}
    \EndFor
    \Else
    \For{$j=i$ to $0$ step $-1$}
    \State{$v[3+2j+1]\gets v[3+2j+1] + a_{i+1}\cdot v[2j] + v[2j+1]$}
    \EndFor
    \EndIf
    \EndFor
\State{$s\gets0$}
\For{$i=1$ to $\abs|\mathbb T|-2$}
\State{$s\gets s + v[2i]P(i+1)a_{\abs|\mathbb T|}$}
\State{$s\gets s + v[2i+1](P(i) + P(i+1))$}
\EndFor
\State{$s\gets s + v[2\abs|\mathbb T|-1]P(\abs|\mathbb T|-1)$}
\State{Return $s$}
\end{algorithmic}
\end{algorithm}
This algorithm works only for $1<k<\abs|\mathbb T|$ and it is not the most efficient.
However, it is shown to illustrate the possibility of applying the technique described in the article to build this kind of efficient iterative algorithms.
It is worth noting that the complexity of this algorithm is~$O(\abs|\mathbb T|^2)$.

\todel{
\subsubsection{Markov and non-Markov processes}
Our approach allows us to immediately write a discrete Markov process in terms of a tensor train.
Indeed, if a system can be in $N$ states at times $0,\,1\,\,\ldots,\,T$,
and we have a transition matrix $A\in\mathbb R^{N\times N}$
\begin{equation}
    A_{ij}=\mathbb P[s_n=i|s_{n-1}=j],
\end{equation}
where $s_k$ is the state at the time $k$.

\subsubsection{Dynamical programming}
\paragraph{Fibonacci number}
\paragraph{Number of terms in sum}
\paragraph{Robot}
}

\todel{
\subsubsection{Matrix calculus: Permanent}
Consider a task of calculating permanent of a matrix~$\{a_{ij}\}=A\in\mathbb C^{d\times d}$.
To solve this problem using the presented technique, let us construct two tensors in TT-format.
The first tensor~$\mathcal A$ will represent products of matrix~$A$ elements in the form
\begin{equation}
    \ttens A{\rangeidx id}=
    a_{i_1 1}
    a_{i_2 2}
    \cdots
    a_{i_d d}.
\end{equation}
This is rank-1 tensor and its cores~$\{\mathcal H_k\in\mathbb C^{1\times d\times1}\}_{k=1}^d$ are
\begin{equation}
    \tc H(1,i,1)
    =a_{ik},
    \quad
     i=\rangeo d.
\end{equation}

\begin{figure}[!h]
\centering
\resizebox{20em}{!}{%
\begin{tikzpicture}[scale=2.50]
 \foreach \x in {1,2,...,6}
      {
          \node[draw, shape=circle] (v\x) at (\x-1,0) {$a_{:,\,\x}$};
          \node[draw, shape=circle] (G\x) at (\x-1,-1) {$\mathcal G_{\x}$};
          \draw [thick] (v\x) -- node[right] {$6$} ++(0,-0.8) --  (G\x);
      }
\def\x{0}
\foreach \x/\r in {1/6,2/15,3/20,4/15,5/6}
      {
          \draw [dashed] (v\x) -- node[above] {$1$} ++(0.8,0) -- (v\fpeval{\x + 1});
          \draw [thick] (G\x) -- node[above] {$\r$} ++(0.8,0) -- (G\fpeval{\x + 1});
      }
\end{tikzpicture}%
}
\caption{Tensor network for calculating of the permanent, $d=6$.
 The  numbers denote ranks (number of terms in sum).
The dashed line shows that at this point the rank is~$1$, which means that there is no summation and the corresponding cores are independent.
}
\label{fig:permanent_net_2}
\end{figure}

The second tensor 
is an indicator tensor for such a set of indices,
in which all indices are different
\begin{equation}
    \ttens I{\rangeidx id}=
\left\{
\begin{aligned}
    1&,& &\text{ if all $\rangeidx id$ are different},\\
    0&,& &\text{ else}.
\end{aligned}
\right.
    \label{}
\end{equation}
The cores~$\mathcal G$ of this tensor are obtained using the following derivative functions
\begin{equation}
    f_i^k(x)=
    \left\{
    \begin{aligned}
        x+2^i&,& x\,\&\,2^i&=0,\\
        \text{None}&,& & \text{ else}
    \end{aligned}
    \right.,
    \label{}
\end{equation}
where the ampersand sign stands for bitwise AND for integers.
In this scheme
the middle-core is the last ($d$-th) core
and its derivative functions~$f^d$ are similar to those given:
\begin{equation}
    f_i^d(x)=
    \left\{
    \begin{aligned}
        1&,& x\,\&\,2^i&=0,\\
        \text{None}&,& & \text{ else}
    \end{aligned}
    \right..
    \label{}
\end{equation}
The $k$-th bit of the input~$x$ of the derivative functions 
corresponds to the $i$-th value of the set $\{1,\,2,\,\ldots,\,d\}$:
if this bit is zero, then the $i$-th value of the index has not occurred before,
and thus this value is valid for the current index.
The function sets this bit in~$x$ and the value~$x$ is passed on.
If the bit has already been set,
the derivative function returns None, since two identical indexes are forbidden.

The permanent value is equal to the convolution of tensor~$\mathcal I$
and tensor~$\mathcal A$.
Since the tensor~$\mathcal A$ is one-rank tensor,
we can look at the computation of the permanent as an contraction of the tensor~$\mathcal I$ with weights equal to the corresponding elements of the given matrix~$A$,
see Fig.~\ref{fig:permanent_net}.

To calculate the permanent numerically,
we do not use rounding;
all operations are performed in sparse format
using only the functions obtained by Algorithm~\ref{a:build_TT_func_f}.
The TT-ranks for some values of~$N$ are shown in the Table~\ref{tab:rans-perm}.
\begin{table}
    \centering
    \begin{tabular}{r|p{35em}}
        N &
        TT-ranks \\\hline\hline
        5 & 1---5---10---10---5---1 \\
        10 &
1---10---45---120---210---252---210---120---45---10---1 \\
        15 & 
1---15---105---455---1365---3003---5005---6435---6435---5005---3003---1365---455---105---15---1\\
    \end{tabular}
    \caption{TT-ranks for the problem of permanent calculation}
    \label{tab:rans-perm}
\end{table}
After calculating the function~$f_i^k$ we get a number~$x$ in the binary representation of which
there are exactly~$k$ bits equal to one.
From this one can concludes that the corresponding rank~$r_k$ is equal to $r_k=\frac{d!}{(d-k)!k!}$.
Using relation~\eqref{eq:convolve_w} we can obtain an upper estimate on the total number of operations for the convolution of the given tensor
as $n_{\text{conv}}= O(2^N N)$.
However, one can notice that on average half of the rows in the cores slices consist entirely of zeros, since index repetition is ``forbidden'':
$\mathcal G_k(i,\,j,\,:)=0$ at index~$i$ corresponds to the~$x$ with the $j$-th bit set.

Thus,
after the cores are built,
the number of operations (additions and multiplications) required to obtain the result at these ranks has asymotics
\begin{equation}
    2n_{\text{conv}}=
    O(2^N N).
    \label{}
\end{equation}
This asymptotic is better
than the one that
can be obtained from the well-known Ryser's formula for calculating the permanent
$$
P(A)=
(-1)^N
\sum_{\mathbb S\subseteq\{1,2,\ldots,N\}}
(-1)^{\abs|\mathbb S|}
\prod_{i=1}^N
\sum_{j\in \mathbb S}a_{ij}.
$$
When applied head-on, this formula requires~$O(2^{N-1}N^2)$ operations.
It is true that if one uses
Hamiltonian walk on $(N-1)$-cube (Gray code)
for a more optimal sequence of subset walks,
this formula will give asymptotic~$O(2^{N-1}N)$
which is only twice as good as ours~\cite[pp. 220--224]{CA1978}.
}

\subsubsection{Knapsack problem}
Consider a 
knapsack problem.
The formulation is the following:
we have~$n$ type of items, each item have weight~$w_i\geq0$ and value~$v_i\geq0$, $i=1,\,2,\,\ldots,\,n$.
The task is to solve optimization problem
\begin{equation}
    \maximize\limits_{\{\rangeidx xn\}}
    \sum_{i=1}^nv_ix_i,
    \quad 
    \text{s.t. }
    \sum_{i=1}^nw_ix_i \leq W,
\end{equation}
where $\{x_i\}_{i=1}^n$
are the unknown number of each item,
value~$W\in\mathbb R$ is the given maximum total weight.
By imposing different constraints on the unknowns~$x_i$,
we obtain different formulations of the problem.

\paragraph{0--1 knapsack problem}
First, consider a formulation in which it is assumed that $x_i\in\{0,\,1\}$ (so called \emph{0--1 knapsack problem}).
To solve this problem, we construct two tensors.
First tensor~$\mathcal V$ represent the total cost of the knapsack
\begin{equation}
    \mathcal V(\rangeidx xn)=
    \sum_{i=1}^dv_ix_i.
\end{equation}
Since the expression
on the right side of this definition
is linear,
this tensor can be represented in TT-format with TT-ranks of no more than~$2$ (see~\eqref{eq:ex:sum}).
Namely, the cores~$\{\mathcal H_k\}$ of the TT-decomposition of the tensor~$\mathcal V$
are
\begin{gather}
    \ts{H_1}0
    =\bigl(1,\, 0 \bigr);
    \quad
    \ts{H_k}0=
\begin{pmatrix}
1 & 0\\
0 & 1
\end{pmatrix},
\;\;2\leq k\leq d-1;
    \quad
    \ts{H_d}0=
\begin{pmatrix}
1\\
0
\end{pmatrix};\\
    \ts{H_1}1
=\bigl(1,\,v_1 \bigr);
    \quad
    \ts{H_k}1
\begin{pmatrix}
    1 & v_k\\
0 & 1
\end{pmatrix},
\;\;2\leq k\leq d-1;
    \quad
    \ts{H_d}1=
\begin{pmatrix}
    v_d\\
1
\end{pmatrix}.
\end{gather}
Here we use indices~$0$ and~$1$ for the middle indices of the TT-cores
so that they correspond to the physical sense of the task---the presence or absence of this item in the knapsack.

The second tensor~$\mathcal I$ is the indicator tensor of the condition in the knapsack problem:
\begin{equation}
    \ttens I{\rangeidx xn}
    =\left\{
    \begin{aligned}
        1&, \text{ if }\sum_{i=1}^nw_ix_i \leq W,\\
        0&, \text{ else}.
    \end{aligned}
    \right.
\end{equation}

We can build functional TT-decomposition of the tensor~$\mathcal I$ with the following set of functions
\begin{equation}
    f_i^k(x)=
    \left\{
    \begin{aligned}
       x+iw[k]&,& x+iw[k]& \leq W,\\
        \None&,& \text{else}
    \end{aligned}
    \right.,
    \quad i=0,\,1
    ;\;\;
    1\leq k\leq d.
\end{equation}

Note that the condition of not exceeding the weight is checked in each function, \ie, the conditions for partial sums are checked: $w_1x_1\leq W$, $w_1x_1+w_2x_2\leq W$, etc.
This does not affect the final result as $w_i\geq0$, but it allows us to reduce the ranks of the cores.
The TT-ranks in this problem are highly dependent on the specific weights of the knapsack elements:
whether they are integer, how large a range of values they have, etc.

The final answer to the knapsack problem is given by finding the maximum of the tensor,
which is the elementwise product of the constructed tensors
\begin{equation}
    \argmax_{\{\rangeidx xn\}}
    \ttens V{\rangeidx xn} \cdot \ttens I{\rangeidx xn}.
\end{equation}
The TT-cores of such a tensor are found as the Kronecker product of the slices of the multiplier cores (see~\cite{Oseledets2011}).
The operation of approximate finding the maximum value of the tensor in TT-format is implemented, 
for example, in the package \texttt{ttpy}.

\todel{
\paragraph{Bounded knapsack problem}
We can easily extend this approach to the problem of the \emph{bounded knapsack problem}.
This problem differs in the conditions on unknowns~$x_i$, now we let
\begin{equation}
    x_i=0,\,1,\,\ldots N_i,\;\;N_i\geq0,\;\;i=\rangeo n.
\end{equation}
In this formulation,
the functions~$f$ will remain the same as in~\eqref{eq:f_KNP},
and the cores of the tensor~$\mathcal V$ will have greater dimensionality: $\mathcal H_i\in\mathbb R^{r_{i-1}\times (N_i+1)\times r_i}$,
$r=\{1,\,2,\,2,\,\ldots,\,2,\,1\}$,
with the following explicit form
\begin{equation}
    \mathcal H_1(:,\,j,\,:)=\bigl(jv_1,\, 1 \bigr);
    \quad
    \mathcal H_k(:,\,j,\,:)=
\begin{pmatrix}
1 & 0\\
jv_i & 1
\end{pmatrix},
\;\;2\leq k\leq n-1;
    \quad
    \mathcal H_n(:,\,j,\,:)=
\begin{pmatrix}
1\\
jv_n
\end{pmatrix}.
\end{equation}
}
\paragraph{Multi-Dimensional bounded knapsack problem}
Another addition,
which can be implemented with slight modifications of the above scheme,
is related to the presence of several constraints.
Namely,
let there be several weights~$\{w_i^{(j)}\}$ associated with each item,
with separate conditions imposed on them
\begin{equation}
    \maximize\limits_{\{\rangeidx xn\}}
    \sum_{i=1}^nv_ix_i,
    \quad 
    \text{s.t. }
    \sum_{i=1}^nw_i^{(j)}x_i \leq W^{(j)},\;\;j=\rangeo M;\;\;
    x_i=0,\,1,\,\ldots N_i,
\end{equation}
where~$M$ is the length of condition vector.

To solve this problem, we generate~$M$ indicator tensors~$\{\mathcal I^{(j)}\}_{j=1}^M$,
one for each condition,
according to the algorithm above.
The we find element-wise product of all this tensors, and find (quasi-) maximum element in the resulting tensor
\begin{equation}
    \argmax_{\{\rangeidx xn\}}
    \mathcal V(\rangeidx xn) \cdot \prod_{j=1}^M\mathcal I^{(j)}(\rangeidx xn).
\end{equation}
The product of a large number of tensors~$\mathcal I^{(j)}$ can lead to a rapid growth of ranks,
to get around this we can round the tensor after each successive multiplication.

\subsubsection{Partition problem}
Consider the partition problem in the following formulation.
We have a multiset~$S$, $\abs|S|=d$ of positive integers
(multiset is a set with possibly repeating elements) and an integer~$n$.
The task is to partition the set~$S$ on~$n$ sets~$\{S_i\}_{i=1}^n$
such that the sum of elements in each set are equal:
\begin{equation}
    \sum_{a\in S_1}a=
    \sum_{a\in S_2}a=
\cdots
=
    \sum_{a\in S_n}a=
    \frac1n\sum_{a\in S}a.
\end{equation}

In order to use our approach to solve this problem,
we construct~$n$ indicator tensors,
each of which corresponds to one of the equalities in the expression above.
Namely, the $j$-th tensor is defined as
\begin{equation}
    \mathcal I_j(\rangeidx id)=
    \left\{
    \begin{aligned}
        1&, \text{ if }\sum_{k=1}^ds[k]\cdot \delta(i_k,\,j) =T,\\
        0&, \text{ else}
    \end{aligned}
    \right.,
\end{equation}
where $s[k]$ is the $k$-th element of the (ordered in some way) set~$S$,
$\delta$ is the Dirac delta function
and
$T\defval \frac1n \sum_{a\in S}a$.

The maximum indices value of this tensor is~$n$.
The index value $i_k=l$ means that the $k$-th element of the set~$S$ belongs to the set~$S_l$.
Thus we have one-to-one correspondence between indices set $\{\rangeidx id\}$ and a partition of the set~$S$.

Derivative functions for the $j$-th tensor are the following
\begin{equation}
    f_i^k(x)=
        \left\{
    \begin{aligned}
        x+s[k]&, \text{ if } i=j,\\
        x&, \text{ else}
    \end{aligned}
    \right.,
    \quad
    1\leq k < d,
    \;\;
    1\leq i\leq n,
\end{equation}
and the middle function which is the last one:
\begin{equation}
    f_i^d(x)=
        \left\{
    \begin{aligned}
        1&, &\text{ if } x+s[d]&=T \text{ and } i=j,\\
        1&, &\text{ if } x&=T \text{ and } i\neq j,\\
        0&, &\text{ else}
    \end{aligned}
    \right.,
    \quad
    1\leq i\leq n.
\end{equation}

Finally we construct indicator tensor~$\mathcal I$ as a Hadamard product of the built tensors
\begin{equation}
    \mathcal I = \mathcal I_1\circ\mathcal I_2\circ\cdots\circ\mathcal I_n.
\end{equation}
The value of this tensor~$\mathcal I$ is $1$
only for the indices corresponding to the desired problem statement.
We can find them by finding the maximal element of the indicator TT-tensor, see Section~\ref{seq:fnd_nnz}
in the Appendix.

\subsubsection{Eight queens puzzle}
Consider the classical problem of eight queens and its extensions.
In the classical variant it is necessary to place 8 queens on a usual $8\times8$ chessboard so that the queens do not beat each other.
In other words, no two queens stand on the same vertical, horizontal, or diagonal.
We will consider this problem on an $N\times N$ board with~$N$ queens.
To solve this problem by our techniques
we construct a tensor of dimension~$N$, each $k$-th index~$i_k$ of which  denote the position of the queen on the $k$-th vertical, so $1\leq i_k\leq N$.
The value of the tensor is an indicator function of the desired state:
$1$ if the location of the queens satisfies the condition
and zero otherwise.

The derivative functions~$f^i_k(x)$ in Python for such a tensor are shown in Fig.~\ref{fig:8_queen_code}.
The are the same for all~$k$, middle-core is the last core,
the above code also covers the function for the last core.

Let us briefly describe their work. The input is not a single number, but an array (it is allowed in our scheme) of zeros and ones.
The first~$N$ bits show the position of the previous (leftmost) queens horizontally. 
The bit corresponding to the current queen is added to them. 
The next~$N$ bits show those fields which are broken from bottom to top, and they are shifted forward.  
And finally, the last series of~$N$ bits shows those fields that are broken from top to bottom, and they are shifted back. 
If the position of the bit corresponding to the position of the current queen contains a one in at least one of the sets,
the producing function returns \None, since it means that the condition has already been violated.
\begin{figure}
\begin{subfigure}[b]{0.45\textwidth}
         \centering
\lstinputlisting[language=Python]{queen.py}
\caption{derivative functions in Python}
\label{fig:8_queen_code}
\end{subfigure}
\hfill
\begin{subfigure}[b]{0.45\textwidth}
         \centering
         \parbox{0.99\linewidth}{
             \def\iclchess#1{\includegraphics[width=0.75\linewidth]{chess_#1.pdf}\\}
            \iclchess9
            \iclchess{10}
         }
\caption{Positions for $N=9$ and $10$.}
\label{fig:8_queen_pos}
\end{subfigure}
\caption{Eight queens puzzle}
\label{fig:8_queen}
\end{figure}

After constructing the tensor,
we can find the desired position: this problem is reduced to finding a non-zero element in the tensor (in this case the value of~$1$), and its algorithm is described in the Appendix, Section~\ref{seq:fnd_nnz}.
The result is shown on Fig.~\ref{fig:8_queen_pos}.

Note that although the state that is passed to each derivative function is~$3N$ in length
and thus can potentially take~$2^{3N}$ combinations,
the real TT-rank is much smaller,
since only a small fraction of this set of combinations is admissible.
See Table~\ref{tab:8_queen_ranks} for the numerical values for the ranks
together with the error in calculating the total number of combinations
\begin{table}
    \def\err#1#2#3{\ensuremath{#1\pm#2\cdot{10}^{#3}}}
    \centering
    \footnotesize
    \begin{tabular}{p{9em}|c|p{7em}|}
        $N$ and truncation threshold $\epsilon$ & TT-ranks & \# of positions \\\hline\hline
    $8$, w/o truncation                       &  1---8---42---140---339---538---482---224---1 & 92 \\
    $8$,  $\epsilon=0$        & 1---8---36---62---74---62---36---8---1       & \err{92.0}2{-14} \\
    $8$,  $\epsilon=10^{-6}$  & 1---8---36---62---74---62---36---8---1  & $92.0\pm0.0$ \\
    $9$, w/o truncation                       &  1---9---56---234---726---1565---2153---1734---740---1 & 352\\
    $9$,  $\epsilon=0$        &  1---9---56---221---592---712---191---54---9---1  & \err{352.0}2{-13}\\
    $9$,  $\epsilon=10^{-6}$        &   1---9---54---172---246---246---172---54---9---1  & \err{352.0}4{-13}\\
    $10$, w/o truncation                       &  1---10---72---364---1393---3842---7289---8838---6426---2576---1 & 724\\
    $10$, $\epsilon=0$        &  1---10---72---339---914---1225---1820---391---72---10---1 & \err{724.0}5{-13} \\
    $10$, $\epsilon=10^{-6}$        & 1---10---72---284---526---606---526---284---72---10---1  & $724.0\pm0.0$ %
    \\
    \hline
    \end{tabular}
    \hbox{}
    \caption{
        TT-ranks and the number of positions calculated using the tensor depending on the size of the boar~$N$ and the truncation threshold~$\epsilon$.
    }
    \label{tab:8_queen_ranks}
\end{table}

Of course, solving the problem of finding permissible combinations in the described way is inefficient. But we can get the number of possible combinations even faster than finding one of them. This number is obtained by convolution of the tensor with vectors consisting of ones.
Besides, by constructing cores of decomposition, we can solve extended problems. For example, the complement problem, when some of the queens are already placed on the board, and we need to find the location of the others.
Or we can consider that queens are placed on the board randomly,
with a given probability distribution law,
and the problem is  to find the probability that such an arrangement will result in the position given by the puzzle rules.

\subsubsection{Combinatorial problems}
\paragraph{Sawtooth sequence}
Sawtooth sequence is a sequence of integer numbers $\{\rangeidx ad\}$ 
such that if $a_{i-1}<a_i$ then $a_i>a_{i+1}$,
and vice versa.

Suppose we have a set of arrays~$\{c_j\}$, 
and for a given set of indices $\rangeidx id$ we want to
construct an indicator tensor equal to 1 if the corresponding sequence
$\{c_1[i_1],\,c_2[i_2],\,\ldots,\,c_d[i_d]\}$
is sawtooth.

In this example, the derivative functions receive as input a sequence of two elements,
the first element contains the value of the previous member of the sequence,
and the second denotes the direction, ``up'' or ``down''.

\begin{equation}
    f_i^k(x) =  \left\{
    \begin{aligned}
        \{c_k[i],\,``up''\}&,\text{ if } c_k[i] < x[1] \text { and } x[2]=``down''\\
        \{c_k[i],\,``down''\}&,\text{ if } c_k[i] > x[1] \text { and } x[2]=``up''\\
        \None&, \text{ else}
    \end{aligned}
    \right..
\end{equation}
Middle function is the last one and
it is of the same form. 
We need to slightly 
alter this definition for the first function for $k=1$.

Such sequences are often found in game problems. 
We can combine this indicator tensor with other conditions. 
For example, we can find the number of possible tooth sequences with certain conditions through 
convolution~\eqref{eq:convl} of the resulting tensor with units.

\paragraph{Number of subsets}
Consider Problem \#10 from the Advanced problems chapter of the book~\cite{Andreescu2002-dc}:
Find the numbers of subsets of $\{\rangeo2000\}$,
the sum of whose elements is divisible by 5.

We can immediately construct an indicator tensor
with binary indices, which equals~$1$
if the given subset (of indices with value 1)
satisfies the condition of the problem.
Namely, let us take the following derivative functions
\begin{equation}
    f_i^k(x)=(x+ik)\mod 5,\quad 1\leq k < d,\;\;i=0,\,1,
\end{equation}
and the middle function which is the last one in this example as
\begin{equation}
    f_i^d(x)=
    \left\{
    \begin{aligned}
        1&,\text{ if }  (x+ik)\mod 5=0\\
        0&,\text{ else }\\
    \end{aligned}
    \right..
\end{equation}
However, for the specified number of elements of the sequence (2000),
it can be a time-consuming task to convolve such a tensor 
even in a sparse format. 
Therefore, let us try to solve this problem analytically,
using the explicit representation of the cores of this tensor.

Note that for the product of any five cores of this tensor, 
starting from the number that gives~$1$ when divided by~$5$, 
it is true
\def\oneterm#1{\left(\sum_{i_{#1}=1}^2\mathcal G_{#1}(:,\,i_{#1},\,:)\right)}
\begin{multline}
    \oneterm{5n+1}
    \oneterm{5n+2}
    \oneterm{5n+3}\times\\
    \times
    \oneterm{5n+4}
    \oneterm{5n+5}
    =
    \begin{pmatrix}
    8&6&6&6&6\\
    6&8&6&6&6\\
    6&6&8&6&6\\
    6&6&6&8&6\\
    6&6&6&6&8\\
    \end{pmatrix}.
\end{multline}
Thus, to solve the problem it is necessary to power this matrix to 2000/5=400.
To do this, let us write this symmetric matrix in the diagonal form:
\begin{equation}
    \begin{pmatrix}
    8&6&6&6&6\\
    6&8&6&6&6\\
    6&6&8&6&6\\
    6&6&6&8&6\\
    6&6&6&6&8\\
    \end{pmatrix}
    =A 
    \begin{pmatrix}
    32&0&0&0&0\\
    0&2&0&0&0\\
    0&0&2&0&0\\
    0&0&0&2&0\\
    0&0&0&0&2\\
    \end{pmatrix}
A^T.
\label{eq:mat86}
\end{equation}
By direct calculations we find that for the matrix A and any $q\,,s\in\mathbb R$ we have
\begin{equation}
 B:=A 
    \begin{pmatrix}
    q&0&0&0&0\\
    0&s&0&0&0\\
    0&0&s&0&0\\
    0&0&0&s&0\\
    0&0&0&0&s\\
    \end{pmatrix}
A^T=
\frac15
\left(
\begin{array}{ccccc}
 q+4 s & q-s & q-s & q-s & q-s \\
 q-s & q+4 s & q-s & q-s & q-s \\
 q-s & q-s & q+4 s & q-s & q-s \\
 q-s & q-s & q-s & q+4 s & q-s \\
 q-s & q-s & q-s & q-s & q+4 s \\
\end{array}
\right).
\end{equation}
Given that the first and last cores
of the TT-disposition are vectors, not matrices,
we need only the first element of the first row~$B[1,\,1]=1/5(q+4s)$.
Using the diagonal form of the matrix~\eqref{eq:mat86}
we can immediately power it to degree 400 by power its eigenvalues to this degree and 
obtaining~$q=32^{400}$ and $s=2^{400}$.

Thus, the final answer to the problem: $1/5(32^{400}+2^{400})$ number of subsets.

\subsubsection{SAT problem}
Consider the standard 
Boolean satisfiability problem (SATisfiability)
in conjunctive normal form (CNF-SAT).
Given~$d$ Boolean variables~$\left\{ x_i \right\}_{i=1}^d$,
taking the value ``True'' or ``False''.
From these variables we form~$m$ logical expressions~$\left\{ A_i \right\}_{i=1}^m$ containing a set of some variables~$\{x_i\}$ or their negations (which we denote by the symbol~$\neg$)
united by logical OR (symbol $\vee$), for example:
\begin{equation}
    A_1=x_1 \vee \neg x_3 \vee x_5,
    \quad
    A_2=\neg x_1 \vee \neg x_2 \vee x_4 \vee  x_5 \vee  x_{10},
    \quad \text{etc.}
    \label{}
\end{equation}
The problem is
to determine
if there is such
a set of variables~$x$
that expressions~$A_i$, combined with logical AND (symbol $\wedge$), give a logical ``True'':
\begin{equation}
    A_1\wedge A_2 \wedge \ldots  \wedge A_m=\text{True}.
    \label{}
\end{equation}

Let us use our method to construct a tensor with~$n$ indices taking the values~0 and~1,
corresponding to logical ``True'' and ``False'',
which is equal to~$1$ if the latter equality is satisfied, and zero otherwise.
Each index corresponds to a different variable~$x_i$.

In this problem, we construct~$m$ separate indicator tensors for each value~$A_j$,
which we then multiply elementwise,
which corresponds to the logical AND operator.
The indices of these tensors are binary and correspond to the \texttt{True} or \texttt{False}
value of the corresponding variable.
The derivative functions for the $j$-th tensor are the following:
\begin{equation}
    f_{\text{True}}^k(x) =    \left\{
    \begin{aligned}
        x&,\text{ if the variable $x_k$ is not part of the condition $A_j$}\\
        x&,\text{ if the variable $x_k$ is included in the condition~$A_j$ with the negation}\\
        1&, \text{ else}
    \end{aligned}
    \right.,
\end{equation}
\begin{equation}
    f_{\text{False}}^k(x) =    \left\{
    \begin{aligned}
        x&,\text{ if the variable $x_k$ is not part of the condition $A_j$}\\
        x&,\text{ if the variable $x_k$ is included in the condition~$A_j$ without the negation}\\
        1&, \text{ else}
    \end{aligned}
    \right..
\end{equation}
The function that corresponds to the last numbered variable 
($x_{10}$ for $A_2$ in the above example)
returns \None{} if its argument $x=0$
thus zeroing out the tensor:
a zero value of~$x$ means 
that no member of the condition~$A_j$ took the value \texttt{True}.

In this problem,
the final rank of the constructed tensor depends strongly on the order of the indices,
although it should be noted that many SAT algorithms also have heuristics on the sequence of passes over the variables.

\subsection{Rank optimization}
The algorithm presented in the main Theorem~\ref{th:main}
does not always give optimal ranks.

Below in this section
we present
additional steps
that reduce the ranks. %
Let's now look more closely at the method of rank reduction described in section~\ref{sec:rank}.

In this method, we combine different, but close, values of the derivative functions.
Let the values of the derived functions be real
and we are given the parameter~$\epsilon$ of ``smallness''.
We start by sorting all possible set values of each of the sets $R[i]$, $i=\rangeo{d-1}$ in Algorithm~\ref{a:build_TT_func_f} in ascending order.
We then partition all the values into such non-overlapping sets whose difference between a larger and a smaller element does not exceed~$\epsilon$.
Namely, we put
\begin{equation}
    S[i][1] = \{x\in R[i]\colon R[i][1]\leq x < R[i][1]+\epsilon  \}
\end{equation}
and then sequentially define
\begin{equation}
    \def\minS{\min\Bigl\{R[i]\setminus\bigcup_{j=1}^{k-1}S[i][j] \Bigr\}}
    S[i][k] = \Bigl\{x\in R[i]\colon \minS\leq x < \minS+\epsilon  \Bigr\}
    \label{}
\end{equation}
until all elements in the initial set~$R[i]$ are exhausted.
If additionally a maximum rank is given
and the number of elements in~$S[i]$ exceeds it,
we combine the sets further,
trying to do so as uniformly as possible.

For each~$i$ we update~$R[i]$ after defining all sets~$S[i]$
at each step:
\begin{equation}
    R[i][k]\gets\text{average}\,(S[i][k]),
    \quad
    k=\rangeo \abs|S[i]|,
\end{equation}
where the \texttt{average} function can be chosen in different ways, 
reasonable choices for it are the arithmetic mean 
and the average between the maximum and minimum elements.
Finally, the change we make in the algorithm is to replace the \texttt{index\_of} function 
with the following \texttt{set\_index\_of} function
\begin{equation*}
    z = \text{set\_index\_of}(y,\,A)
   \; \Longleftrightarrow\;
   y\in A[z].
\end{equation*}
As the second argument, we pass the sets~$S$ to this function instead of~$R$. 
Technically, we use the \texttt{searchsorted} function from the \texttt{numpy} package, 
and work with interval bounds rather than looking in sets as stated in the definition.

\subsubsection{Decrease the number of outputs}
\def\S_#1{S[#1]}%
Assume that the middle-index is the last index: $l=d$.
This avoids duplicating operations.

If the number of possibles vales of the tensor is small, \ie{} the length of the array~$R[d]$ is small,
we can perform the following trick.
Consider the images~$(f_{i_d}^{(d)})^{-1}$ of the $d^{\text{th}}$ set of derivative functions
\begin{equation}
    (f_{i_d}^{(d)})^{-1}(a)
\defval
\{x\colon f_{i_d}^{(d)}(x)=a\},
\quad
i_d=\rangeo{n_d}
.
\end{equation}
The idea of reducing the rank of the TT-decomposition
is that the values from this set are indistinguishable for the function~$f_{i_d}^{(d)}$.
Thus, if there exists a set indistinguishable for all
set of the last derivative functions,
then its elements can be encoded by a single basis vector.
Namely,
consider the following array (ordered set) of sets
\begin{equation}
    \S_{d-1}
\defval
\left\{ s=\bigcap_{i=1}^{n_d}(f_i^{(d)})^{-1}(a_i)\colon  \{a_1,\,a_2,\,\ldots,\,a_{n_d}\}\in (R[d])^{\times n_d},\;\;s\neq\varnothing\right\}.
\end{equation}
The array~$\S_{d-1}$ contains all nonempty sets, that are indistinguishable for functions~$f_i^{(d)}$ for any value of~$i=\rangeo{n_d}$.
The order in which these sets are included in~$\S_{d-1}$ is unimportant, but it is fixed.
The number of all this sets is not greater than the number of outputs in $R[d-1]$: $\text{len}(\S_{d-1})\leq \text{len}(R[d-1])$
as each element of~$R[d-1]$ belongs exactly to one set in~$\S_{d-1}$.
Due to this, a decrease in ranks is achieved. 

This procedure is repeated sequentially for each output. At the~$k^{\text{th}}$ step we find an array of sets
\begin{equation}
\S_{k-1}
\defval
\left\{ s=\bigcap_{i=1}^{n_k}(f_i^{(k)})^{-1}(A_i)\colon  \{A_1,\,A_2,\,\ldots,\,A_{n_k}\}\in (\S_k)^{\times n_d},\;\;s\neq\varnothing\right\},
\end{equation}
where $\{A_i\}$ are sets and we define the image on a set as
\begin{equation}
    (f_{i_k}^{(k)})^{-1}(A)
\defval
\{x\colon f_{i_k}^{(k)}(x)\in A\}.
\end{equation}
After all sets~$\S_k$ ($k=\rangeo{d-1}$) are found, we construct integer-valued functions
by analogy with the functions~\eqref{eq:i_func_simple}
\begin{equation}
    \hat f^{(k)}_i(x)
    =
    \text{set\_index\_of}\left(f_i^{(k)}\bigl(
    \hat x\bigr),\,\S_i
\right),
    \quad
    \hat x\in \S_{k-1}[x]
,
\;\;
     q=\rangeo d.
    \label{eq:i_func_cmplx}
\end{equation}
We let $\S_d=\{\{x\}\colon x \in R[d]\}$.
The definition~\eqref{eq:i_func_cmplx} is correct
because the value of the functions~$f^{(i)}$ does not depend on the choice of a particular element~$\hat x$ in the set~$\S_{i-1}[x]$.
\subsubsection{Rounding}
The cores that are obtained using Algorithm~\ref{a:build_TT_func_G} have one important property---they are ,,almost'' orthogonal in the sense that
\begin{equation}
    \sum_{i}\sum_\alpha
    \mathcal G_k(\alpha,\,i,\,l)
    \mathcal G_k(\alpha,\,i,\,m)
    =\lambda_l \delta_{lm}
    ,\quad
    k=\rangeo{d-1}.
    \label{eq:Q_orth}
\end{equation}
This is true, because matrices~$Q_k(i)=\mathcal G_k(:,\,i,\,:)$ 
consist of rows either identically equal to the zero vector or the basis vector~$e(j)^T$ for some~$j$.
The values of the coefficients~$\lambda_l$ have the physical meaning
of the number of occurrences of the value~$l$ as the value of the function~$f^{(k)}$.
This number
is equal to the number of occurrences of row vector~$e(l)^T$ in all matrices~$Q_k(i)$.
Thus $\lambda_l>0$.
The consequence of relation~\eqref{eq:Q_orth} is the following theorem which is a modified Lemma~3.1 from~\cite{Oseledets2011}:
\def\thetheorema{C.1}
\begin{theorema}
    Let~$Q_k(i)=\mathcal G_k(:,\,i,\,:)$, $Q_k(i)\in\mathbb R^{r_{k-1}\times r_k}$, $r_0=1$, be indexed matrices with tensors $\mathcal G_k$ satisfying~\eqref{eq:Q_orth}.
    Let the matrix~$Z$ be defined as follows
    \begin{multline}
        Z(\overline{i_1i_2\ldots i_k},\,l)\defval
        Q_1(i_1)Q_2(i_2)\cdots Q_k(i_k)=\\=
        \sum_{\alpha_1,\,\ldots,\,\alpha_{k-1}}
        \mathcal G_1(1,\,i_1,\,\alpha_1)
        \mathcal G_2(\alpha_1,\,i_2,\,\alpha_2)
        \cdots
        \mathcal G_k(\alpha_{k-1},\,i_k,\,l).
    \end{multline}
Then the matrix~$Z$ satisfy the following
orthogonality condition
\begin{equation}
    (Z^TZ)(l,\,m)=
    \sum_{\overline{i_1i_2\ldots i_k}}
        Z(\overline{i_1i_2\ldots i_k},\,l)
        Z(\overline{i_1i_2\ldots i_k},\,m)
        =\Lambda_l\delta_{lm}
\end{equation}
with natural $\Lambda_l\in\mathbb N$.

\end{theorema}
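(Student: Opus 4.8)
The plan is to prove the statement by induction on the number $k$ of cores multiplied together, strengthening the claim to the form best suited to the induction: for every $k\ge1$, writing $Z_k$ for the matrix $Z$ of the statement and $\Lambda^{(k)}_l:=(Z_k^{T}Z_k)(l,l)$, the matrix $Z_k^{T}Z_k$ is diagonal and its diagonal entries $\Lambda^{(k)}_l$ are strictly positive integers. The base case $k=1$ is essentially a restatement of \eqref{eq:Q_orth}: since $r_0=1$, the matrix $Z_1$ is the vertical stack of the $1\times r_1$ row slices $Q_1(i_1)$, so $(Z_1^{T}Z_1)(l,m)=\sum_{i_1}\mathcal G_1(1,i_1,l)\,\mathcal G_1(1,i_1,m)=\lambda_l\delta_{lm}$. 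Recalling that the slices here are those produced by Algorithm~\ref{a:build_TT_func_G}, every row slice $Q_1(i_1)$ is either the zero row or a basis row $e(j)^{T}$, so $\lambda_l$ is exactly the number of slices $i_1$ with $Q_1(i_1)=e(l)^{T}$; hence $\lambda_l\in\mathbb N$, and $\lambda_l\ge1$ because $\lambda_l>0$.

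For the inductive step I would first record the associativity recursion
\[
Z_k(\overline{i_1i_2\ldots i_k},\,l)=\sum_{\alpha}Z_{k-1}(\overline{i_1i_2\ldots i_{k-1}},\,\alpha)\,Q_k(i_k)(\alpha,\,l),
\]
substitute it into $(Z_k^{T}Z_k)(l,m)$, and carry out the summation over the multi-index $\overline{i_1i_2\ldots i_{k-1}}$ first, which turns the expression into
\[
(Z_k^{T}Z_k)(l,m)=\sum_{i_k}\sum_{\alpha,\beta}Q_k(i_k)(\alpha,\,l)\,Q_k(i_k)(\beta,\,m)\,(Z_{k-1}^{T}Z_{k-1})(\alpha,\,\beta).
\]
By the induction hypothesis $(Z_{k-1}^{T}Z_{k-1})(\alpha,\beta)=\Lambda^{(k-1)}_\alpha\delta_{\alpha\beta}$, so the inner double sum collapses to $\sum_{\alpha}\Lambda^{(k-1)}_\alpha\,Q_k(i_k)(\alpha,l)\,Q_k(i_k)(\alpha,m)$.

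At this point I invoke the structural fact that already made \eqref{eq:Q_orth} hold: every row of every slice $Q_k(i)=\mathcal G_k(:,i,:)$ is the zero row or a basis row $e(j)^{T}$. Consequently, for fixed $i_k$ and $\alpha$, the product $Q_k(i_k)(\alpha,l)\,Q_k(i_k)(\alpha,m)$ equals $\delta_{lm}$ times the indicator that row $\alpha$ of $Q_k(i_k)$ equals $e(l)^{T}$; in particular it vanishes whenever $l\ne m$, which proves diagonality, and for $l=m$ it gives
\[
\Lambda^{(k)}_l=\sum_{i_k}\sum_{\alpha}\Lambda^{(k-1)}_\alpha\,\bigl[\,Q_k(i_k)(\alpha,:)=e(l)^{T}\,\bigr].
\]
Each $\Lambda^{(k-1)}_\alpha$ is a positive integer by the induction hypothesis, so $\Lambda^{(k)}_l$ is a nonnegative integer; and since setting $l=m$ in \eqref{eq:Q_orth} yields $\sum_{i_k}\sum_{\alpha}[\,Q_k(i_k)(\alpha,:)=e(l)^{T}\,]=\lambda_l\ge1$, the index set over which $\Lambda^{(k)}_l$ is summed is nonempty and $\Lambda^{(k)}_l\ge\lambda_l\ge1$. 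Taking $k$ as in the statement then finishes the proof.

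I expect the only point needing care — not really an obstacle — to be the integrality and positivity of $\Lambda_l$: this is precisely where one must use the zero/basis-row structure of the cores produced by Algorithm~\ref{a:build_TT_func_G} rather than the abstract relation \eqref{eq:Q_orth} alone, and it is what distinguishes the statement from the purely linear-algebraic Lemma~3.1 of~\cite{Oseledets2011}, whose conclusion is only orthogonality up to an (a priori real) diagonal. Everything else is routine manipulation of matrix products and reordering of finite sums.
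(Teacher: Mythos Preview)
Your proof is correct. The paper does not actually give a proof of Theorem~C.1; it only states the result as ``a modified Lemma~3.1 from~\cite{Oseledets2011}'' and a consequence of relation~\eqref{eq:Q_orth}. Your induction on~$k$ is precisely the standard proof of that lemma, augmented with the zero/basis-row structure of the cores to secure integrality and positivity of~$\Lambda_l$, so it is exactly the argument the paper points to.
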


Thus, when rounding the tensor with the algorithm  described in~\cite{Oseledets2011},
we can start with the second step of this algorithm,
skipping the orthogonalization step.
In the case of setting the accuracy~$\epsilon$
the threshold for discarding the singular numbers  must take into account the values~$\Lambda_l$.

In the case where we need an exact representation of the given tensor in TT-format,
but with the smallest possible ranks,
the following consequence helps us.

\def\theconsequence{C.1}
\begin{consequence}
The 
cores, obtained by
Algorithm~\ref{a:build_TT_func_G},
have the optimal ranks if the unfolding of the last core is of full-column rank.
\end{consequence}

\subsection{Finding non-zero element in a TT-tensor}\label{seq:fnd_nnz}
Consider the case when the TT-tensor is an indicator tensor of some subset of its index values,
\ie{} it is equal to one (or an arbitrary values greater than zero) on a small number of combinations of its index values and to zero on the other combinations.
The problem is to find at least one combination where the tensor value is greater than zero.

The solution to this problem is based on the algorithmic simplicity of multidimensional summation of the tensor defined in the TT format.
Let $\mathcal  I$ be the tensor of interest.
Consider the sum of values of~$\mathcal  I$ on all variables except the first one
\begin{equation}
    v_1(i)=
    \sum_{i_2=1}^{n_2} \cdots \sum_{i_d=1}^{n_d}\ttens I{i,\,i_2,\,\ldots,\,i_d}.
    \label{}
\end{equation}
Knowing~$v_1$, we can find out the value of $\hat \imath_1$ of the first index of the desired combination:
\begin{equation}
    \hat \imath_1 = \argmax_iv_1(i) > 0.
    \label{}
\end{equation}
Indeed, if $\hat \imath_1$ is not a part of any desired combination,
then for any values of other indices the value of the tensor is zero: $\ttens I{\hat \imath_1,\,i_2,\,\ldots,\,i_d}=0$.
But this contradicts the fact that the value of the sum over these variables is greater than zero.

Then we sequentially find the indices of the desired combination.
For the second index:
\begin{equation}
    v_2(i)=
    \sum_{i_3=1}^{n_2} \cdots \sum_{i_d=1}^{n_d}\ttens I{\hat \imath_1,\,i,\,i_3,\,\ldots,\,i_d},
    \label{}
\end{equation}
\begin{equation}
    \hat \imath_2 = \argmax_iv_2(i)
    \label{}
\end{equation}
ans so on.
This sequence of steps is summarized in Algorithm~\ref{a:ome_max_val_TT}.
\begin{algorithm}[tbh!]%
    \caption{Algorithm for calculating indices set of the non-zero value of a TT-tensor}
    \label{a:ome_max_val_TT}
    \begin{algorithmic}[1]
        \Require{Cores $\{\mathcal G_i\}_{i=1}^d$ of the TT-decomposition of the tensor $\mathcal I$ that takes non-negative values}
        \Ensure{Set of indices $\{\rangeidx {\hat \imath}d\}$ s.t. $\ttens I{\rangeidx {\hat \imath}d}>0$}

        \For{$k=1$ to $d$}
        \State{$G_k \gets \sum_{i=1}^{n_k}\ts {G_k}i$}
        \EndFor
        \For{$k=1$ to $d$}
        \State{$v_k(i) \gets \mathcal G_1(1,\,\hat \imath_1,\,:)\ts{G_2}{\hat \imath_2}\cdots \ts{G_k}{\hat \imath_{k-1}}\ts{G_k}{i}  G_{k+1}\cdots G_d $}
        \State{$\hat \imath_k \gets  \argmax\limits_iv_k(i)$}
        \EndFor
\State{Return $\{\rangeidx {\hat \imath}d\}$}
\end{algorithmic}
\end{algorithm}

\end{document}